\newtheorem{theorem}{Theorem}[section]
\newtheorem{lemma}[theorem]{Lemma}
\newtheorem{corollary}[theorem]{Corollary}
\newtheorem{proposition}[theorem]{Proposition}
\theoremstyle{definition}
\newtheorem{definition}[theorem]{Definition}
\newtheorem{example}[theorem]{Example}
\sloppy \setcounter{tocdepth}{1}
\begin{document}

\title{Induced and Complete Multinets}

%
%

\author{J. Bartz}
\address{Department of Mathematics \\ Francis Marion University \\
Florence \\ SC 29506 USA} \email{ jbartz@fmarion.edu}

\dedicatory{}
\thanks{}

\begin{abstract}
Multinets are certain configurations of lines and points with multiplicities in the complex projective plane $\mathbb{P}^2$. They appear in the study of resonance and characteristic varieties of complex hyperplane arrangement complements and cohomology of Milnor fibers.  In this paper, two properties of multinets, inducibility and completeness, and the relationship between them are explored with several examples presented. Specializations of multinets plays an integral role in our findings. The main result is the classification of complete 3-nets.
\end{abstract}

\keywords{hyperplane arrangement, net, multinet, induced multinet, complete multinet, $K(\pi,1)$ arrangement, free arrangement}

\maketitle

\date{\today}


\section{Introduction}
\label{sec:1}

Multinets are certain configurations of lines and points with multiplicities in the complex projective plane $\mathbb{P}^2$. More specifically they are multi-arrangements of projective lines partitioned into three or four blocks with some additional combinatorial properties (see section \ref{sec:2}). They originally arose in the study of resonance and characteristic varieties of the complement of a complex hyperplane arrangement in \cite{FY, LY}.  Multinets have also appeared while studying the cohomology of Milnor fibers in \cite{DS}.

Very few examples of multinets with non-trivial multiplicities were known initially. It was observed in \cite{FY} that several of the earliest known examples satisfied an extra property which implied the underlying arrangements were $K(\pi,1)$-arrangements. These multinets are referred to as \emph{complete multinets}. More recently a systematic method of constructing multinets was introduced in \cite{BY} and produced a variety of new examples known as \emph{induced multinets}. Not all multinets are induced. In fact two line arrangements can support the same multinet structure while not being lattice equivalent. Such arrangements are referred to as \emph{specializations} of a given multinet. In the paper we recall some definitions and known results, give examples of induced and non-induced multinets, and investigate the completeness of the induced multinets presented in \cite{BY}.  Specializations of multinets plays an integral role in our findings. The main result is the classification of complete 3-nets.
  
The paper is organized as follows. In section \ref{sec:2} we recall basic definitions and relevant properties of multinets. In section \ref{sec:3} we exhibit examples of induced and non-induced multinets while exploring the notion of specializations of a multinet. Section \ref{sec:4} discusses completeness of multinets and contains the main result of the paper, the classification of complete 3-nets. Finally some open problems are listed in section \ref{sec:5}.

\section{Preliminaries}
\label{sec:2}

We recall basic definitions and relevant properties of multinets. Several well-known examples are collected. A description of monomial groups and their arrangements is given. These arrangements were used in \cite{B,BY} to systematically construct examples of multinets known as \emph{induced multinets}.  A synopsis is given of this  construction and illustrated by obtaining a $(3,4)$-net realizing $\mathbb{Z}/2\mathbb{Z}\times\mathbb{Z}/2\mathbb{Z}$ as an induced multinet. 

\subsection{Pencils of curves and multinets}

There are several equivalent ways to define multinets. Here we present them using pencils of plane curves. A \emph{pencil of plane curves} is a line in the projective space of homogeneous polynomials from $\mathbb{C}[x_1,x_2,x_3]$ of some fixed degree $d$.  Any two distinct curves of the same degree generate a pencil, and conversely a pencil is determined by any two of its curves $C_1, C_2$. An arbitrary curve $C$ in the pencil (called a \emph{fiber}) is $C=aC_1 + bC_2$ where $[a:b]\in \mathbb{P}^1.$  Every two fibers in a pencil intersect in the same set of points $\mathcal{X}=C_1\cap C_2,$ called the \emph{base} of the pencil. If fibers do not have a common component (called a \emph{fixed component}), then the base is a finite set of points.

A curve of the form  $\prod_{i=1}^q \alpha_i^{m_i},$ where $\alpha_i$  are distinct linear forms and $m_i\in \mathbb{Z}_{>0}$ for $1\leq i\leq q,$ is called \emph{completely reducible}. Such a curve is called \emph{reduced} if $m_i=1$ for each $i$.  We are interested in connected pencils of plane curves without fixed components and at least three completely reducible fibers. By connectivity we mean the nonexistence of a completely reducible fiber whose distinct components intersect only at $\mathcal{X}$. For conciseness we refer to such a pencil as a \emph{Ceva pencil}.

\begin{definition} 
The union of all completely reducible fibers (with a fixed partition into fibers, also called \emph{blocks}) of a Ceva pencil of degree $d$ is called a  ($k,d$)-\emph{multinet} where $k$ is the number of the blocks.  The base $\mathcal{X}$ of the pencil is determined by the multinet structure and called the \emph{base} of the multinet.

If the intersection of each two fibers is transversal, i.e. $|\mathcal{X}|=d^2$ and hence all blocks are reduced, then the multinet is called a \emph{net}. If $|\mathcal{X}|<d^2$ we call the multinet \emph{proper}. If all blocks are reduced the multinet is said to be \emph{light}. If there are non-reduced blocks we call the multinet \emph{heavy}.  A block of a multinet is said to be a \emph{pencil} if all of its lines intersect at a common point.
\end{definition}

From a projective geometry perspective, a $(k,d)$-multinet is a multi-arrangement  $\mathcal{A}$ of lines in $\mathbb{P}^2$ provided with multiplicities $m(\ell)\in \mathbb{Z}_{>0}$ ($\ell\in \mathcal{A}$) and partitioned into $k$ blocks $\mathcal{A}_1,\ldots, \mathcal{A}_k$ ($k\geq 3$) subject to the following two conditions. 

(i) Let $\mathcal{X}$ be the set of the intersections of lines from different blocks. For each point $p\in\mathcal{X}$, the number $$n_p=\sum_{\ell\in \mathcal{A}_i, p\in \ell} m(\ell)$$ is independent on $i$. This number is called the \emph{multiplicity} of $p$. 

(ii) For every two lines $\ell$ and $\ell'$ from the same block, there exists a sequence of lines from that block $\ell=\ell_0,\ell_1,\ldots,\ell_r=\ell'$ such that $\ell_{i-1}\cap\ell_i\not\in\mathcal{X}$ for $1\leq i\leq r$. This is the connectivity condition.

Multinets can be defined purely combinatorially using incidence relations. Note that  the multiplicity $m(\ell)$  for each $\ell\in \mathcal{A}$ equals the multiplicity of its corresponding linear factor in the completely reducible fibers of the Ceva pencil. From this viewpoint a net is a multinet with $m(\ell)=n_p=1$ for all $\ell\in\mathcal{A}$ and $p\in\mathcal{X}$. 

From a combinatorial viewpoint $(k,d)$-nets are the realization of $k-2$ pairwise orthogonal Latin squares of size $d$ (after identifying all blocks). If $k=3$, the Latin square gives a multiplication table of a quasi-group $G$ and the associated net is said to \emph{realize $G$}. The classification of groups which can be realized by nets has been completed in \cite{KNP1, KNP, Ynet}.

\subsection{Properties of multinets and examples}

Several important properties of multinets are listed below which have been collected from \cite{FY,St, Yu2}. 

\begin{proposition} 
\label{properties}
Let $\mathcal{A}$ be a $(k,d)$-multinet. Then:
\begin{enumerate}
\item $\sum_{\ell\in \mathcal{A}_i}m(\ell)=d$, independent of $i$;
\item $\sum_{\ell\in \mathcal{A}}m(\ell)=dk$;
\item $\sum_{p\in\mathcal{X}}n_p^2=d^2$ (B\'ezout's theorem);
\item $\sum_{p\in\mathcal{X}\cap\ell}n_p=d$ for every $\ell\in \mathcal{A}$;
\item There are no multinets with $k\geq 5$; 
\item All multinets with $k=4$ are nets.
\end{enumerate}
\end{proposition}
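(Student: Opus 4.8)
The plan is to treat the six statements in two groups: the first four follow directly from the definition of a Ceva pencil together with Bézout's theorem, while the last two are the substantive results and require the theory of the fibration associated to the pencil.

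First I would record the elementary facts. By definition each block $\mathcal{A}_i$ is a completely reducible fiber $\prod_\ell \alpha_\ell^{m(\ell)}$ of the degree-$d$ pencil, and $m(\ell)$ is the exponent of the corresponding linear factor $\alpha_\ell$. Since every fiber of a fixed pencil is a plane curve of degree $d$, reading off the degree gives $\sum_{\ell \in \mathcal{A}_i} m(\ell) = d$, which is (1); it is independent of $i$ precisely because all fibers share the degree $d$. Summing this identity over the $k$ blocks immediately yields (2).

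For (3) and (4) I would invoke Bézout. Since the pencil has no fixed component, any two blocks, viewed as fibers $C_i, C_j$ with $i \ne j$, meet in the finite base scheme $\mathcal{X} = C_i \cap C_j$. At a point $p \in \mathcal{X}$ each $C_i$ is locally the product of the block-$i$ lines through $p$ (to their multiplicities) times a unit, of total local degree $n_p$ by condition (i); as lines from different blocks are distinct and hence meet transversally, the local intersection number is $(C_i \cdot C_j)_p = n_p \cdot n_p = n_p^2$. Bézout's theorem $\sum_p (C_i \cdot C_j)_p = d^2$ then gives (3). For (4), fix $\ell \in \mathcal{A}_i$ and a block $C_j$ with $j \ne i$; the line $\ell$ is not a component of $C_j$, and one checks that $\ell \cap C_j = \ell \cap \mathcal{X}$ set-theoretically (a point of $\ell$ lies in $\mathcal{X}$ iff some block-$j$ line passes through it, which happens exactly when $n_p > 0$). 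The local multiplicity at each such $p$ is the total multiplicity of block-$j$ lines through $p$, namely $n_p$, so applying Bézout to $\ell$ and $C_j$ gives $\sum_{p \in \mathcal{X} \cap \ell} n_p = 1 \cdot d = d$.

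The two remaining statements are the hard part. My plan is to pass to the rational map $\phi = [C_1 : C_2] \colon \mathbb{P}^2 \dashrightarrow \mathbb{P}^1$ defined by the pencil, resolve the base locus $\mathcal{X}$ by blowing up, and obtain a genuine fibration $\bar\phi \colon \tilde{X} \to \mathbb{P}^1$ whose generic fiber $F$ is connected (by the connectivity built into a Ceva pencil). The $k$ completely reducible fibers are among the special fibers of $\bar\phi$, and, being unions of rational curves, they are maximally degenerate, so each forces a strictly positive jump $\chi(\bar\phi^{-1}(s)) - \chi(F)$ over $F$. I would then use additivity of the topological Euler characteristic, $\chi(\tilde{X}) = \chi(\mathbb{P}^1)\chi(F) + \sum_s\big(\chi(\bar\phi^{-1}(s)) - \chi(F)\big)$, together with the computation of $\chi(\tilde{X})$ from the blow-ups, to bound the number of such fibers, obtaining $k \le 4$ and forcing transversality (equivalently $|\mathcal{X}| = d^2$, i.e.\ a net) when $k = 4$. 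The main obstacle is controlling these Euler-characteristic contributions sharply enough to yield the clean bound; for this delicate count I would follow the analysis of \cite{FY, St, Yu2}, from which (5) and (6) are drawn.
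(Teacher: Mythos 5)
Your proposal is correct and consistent with the paper, which states these properties without proof as facts collected from \cite{FY,St,Yu2}: your degree-counting and B\'ezout arguments for (1)--(4) are the standard ones, and for (5)--(6) you correctly identify the Euler-characteristic/fibration method and defer the sharp count to the same references the paper cites. No gaps.
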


\begin{example}
\label{trivial}
An arrangement comprised of $k$ lines which intersect at a common point supports a $(k,1)$-net  where each block consists of one line. This case corresponds to a so-called local resonance component. It is considered to be trivial and we will often tacitly assume that $d>1$.
\end{example}

\begin{example}
\label{Zn}
For each $n\geq 2$, there is a Ceva pencil generated by $x^n-y^n$ and $y^n-z^n$ with third completely reducible fiber given by $x^n-z^n$. It is commonly referred to as the Fermat pencil. The corresponding arrangement supports a $(3,n)$-net realizing $\mathbb{Z}/n\mathbb{Z}$ with each block being a pencil. The common intersection point of each block lies outside of the base of the net. For $n=3$, this is one of the specializations of a Pappus arrangement (cf. subsection \ref{(3,3)-nets}). Yuzvinsky showed in Proposition 3.3 of \cite{Ynet} that a $(3,n)$-net with all blocks being pencils is projectively equivalent to the arrangement defined by $Q=[x^n-y^n][x^n-z^n][y^n-z^n]$. 
\end{example}

\begin{example} 
\label{G(n,1,3)}
For each $n\geq1$, a $(3,2n)$-multinet is given by the pencil generated by polynomials $x^n(y^n-z^n)$ and $y^n(x^n-z^n)$ with the third completely reducible fiber being $z^n(x^n-y^n)$. These are the projectivizations of the reflection arrangements for the full monomial groups $G(n,1,3)$ (see subsection \ref{monomial}). For $n=1$, it gives the only (up to projective isomorphism) $(3,2)$-net of Coxeter type $A_3$; for $n=2$, it is the $(3,4)$-multinet of Coxeter type $B_3$. These multinets are heavy when $n>1$.
\end{example}

\begin{example}
\label{H}
The cubics $xyz$ and $x^3+y^3+z^3$ generate a Ceva pencil  with 4 completely reducible fibers. They give a $(4,3)$-net known as the Hesse configuration. This is the only currently known multinet with 4 blocks. A long-standing conjecture posed by Yuzvinsky is that the Hesse configuration is the unique 4-net up to projective isomorphism. Dunn, Miller, Wakefield, and Zwicknagl proved in \cite{DMWZ} that the Hesse configuration is the unique (4,3)-net in $\mathbb{P}^2$ and no $(4,d)$-nets exists in $\mathbb{P}^2$ for $d=4, 5, 6$.  An alternate proof of the non-existence of (4,4)-nets in $\mathbb{P}^2$ using tropical geometry was given by G\"unt\"urk\"un and K\.i\c{s}\.isel in \cite{GK}. 
\end{example}

\subsection{Monomial groups and their arrangements}
\label{monomial}

Examples of multinets can be derived from certain reflection arrangements of finite complex monomial groups. Historically monomial groups played a useful role in the representation theory of groups (see \cite{O}). Reflection arrangements of finite complex monomial groups and their subarrangements have been studied extensively, especially regarding their connections with free arrangements (see \cite{OT}). Below we summarize the description of finite complex monomial groups and their arrangements given in \cite{OT}. 

Let $V=\mathbb{C}^\ell$ with $\ell>1$ and choose a basis $\{e_1,\dots, e_\ell\}$ of $V$. For any integer $n>1$, let $\mathbb{Z}/n\mathbb{Z}$ denote the cyclic group of order $n$ generated by $\theta=\exp(2\pi i/n)$. Put $I=\{1,2,\dots, \ell\}$ and let $\epsilon: I \rightarrow \mathbb{Z}/n\mathbb{Z}$ be any function. The \emph{full monomial group}, denoted $G(n,1,\ell)$, is the subgroup of $GL(V)$ consisting of all transformations $$g(\sigma,\epsilon)e_i=\varepsilon(i)e_{\sigma(i)}$$ where $\sigma\in\textrm{Sym}(\ell)$, the symmetric group on $\ell$ symbols. Equivalently, the full monomial group is the wreath product of $\mathbb{Z}/n\mathbb{Z}$ and $\textrm{Sym}(\ell)$, consisting of all $\ell\times \ell$ monomial matrices with entries in $\mathbb{Z}/n\mathbb{Z}$. Its reflection arrangement is said to be \emph{of type $G(n,1,\ell)$} and given by $$Q=x_1\dots x_\ell \prod_{1\leq i<j\leq \ell} (x_i^n-x_j^n).$$

Another family of reflection arrangements can be defined using certain irreducible subgroups of the full monomial group $G(n,1,\ell)$. Let $p$ be divisor of $n$ and $G(n,p,\ell)$ be the subgroup of the full monomial group consisting of all $g(\sigma,\epsilon)$ where $\prod \epsilon(i)$ is a power of $\theta^p$. These groups are generated by reflections and irreducible since $n>1$. If $p<n$, the subgroup $G(n,p,\ell)$ contains the reflections $e_i\mapsto \theta^p e_i$ and $e_i\mapsto e_j$ for $i\ne j$. In this case, the corresponding reflection arrangement is the same as the reflection arrangement of the full monomial group $G(n,1,\ell)$. On the other hand, if $p=n$, the reflection arrangement of the subgroup $G(n,n,\ell)$ is defined by  $$Q=\prod_{1\leq i<j\leq \ell}(x_i^n-x_j^n)$$ and said to be of \emph{type $G(n,n,\ell)$}. For $n=2$ the full monomial group $G(2,1,\ell)$ is the Coxeter group of type $B_\ell$ and $G(2,2,\ell)$ is the Coxeter group of type $D_\ell$.

\subsection{Induced multinets}
\label{induced}

Few examples of multinets with non-trivial multiplicities were known initially. Then a systematic method of constructing multinets was introduced in \cite{B,BY} and produced a variety of new examples known as \emph{induced multinets} which we define below.  We briefly describe the method of producing induced multinets and give a summary of their combinatorial properties.

The notion of multinets can be generalized to ${\mathbb P^r}$ ($r>2$) by using pencils of homogeneous polynomials of $r+1$ variables. Presently the only known multinets in ${\mathbb P^r}$ for $r>2$ are the $(3,2n)$-nets in ${\mathbb P^3}$ given for each $n\in \mathbb{Z}_{>0}$ by the defining polynomial $$ Q_n=[(x_0^n-x_1^n)(x_2^n-x_3^n)][(x_0^n-x_2^n)(x_1^n-x_3^n)][(x_0^n-x_3^n)(x_1^n-x_2^n)]$$ where the brackets determine the blocks. This arrangement is the collection of all (projectivizations of) reflection hyperplanes of the finite complex monomial group $G(n,n,4)$ (see subsection \ref{monomial}). For $n=2$ it is the Coxeter group of type $D_4$. 

Each block of $Q_n$ is partitioned in two \emph{half-blocks} (determined by parentheses) of degree $n$ each. Notice that all the planes of a half-block intersect at one line, called the \emph{base of the half-block}. For instance the base of the leftmost half-block is given by the system $x_0=0$, $x_1=0$.

Multinets can be constructed as follows. Intersect  $Q_n$ with a plane $H$ that does not belong to $Q_n$. The resulting multi-arrangement in $H$ is denoted by $\mathcal{A}^H$  and referred to as the {\it arrangement induced by $Q_n$}. The pencil in $\mathbb P^3$ corresponding to $Q_n$ induces a pencil in ${\mathbb P^2}$ with 3 completely reducible fibers. It may happen that  the pencil has a fixed component. In this case, we cancel the fixed components obtaining a smaller arrangement $\mathcal{A}^H_0$ with a multinet structure. Abusing the notation slightly we will call $\mathcal{A}^H$ (if there is no fixed component) or $\mathcal{A}^H_0$, provided with the partitions into fibers of the induced pencil, the \emph{induced multinet}.

A systematic study of the possible combinatorics of induced multinets obtained from $Q_n$ was performed in \cite{B, BY}. Induced multinets from $Q_1$ are either $(3,2)$-nets realizing $\mathbb{Z}/2\mathbb{Z}$ or trivial (cf. Example \ref{trivial}). The following theorem from \cite{BY} gives a summary of the possibilities for $n>1$.   The first five cases are heavy multinets whereas the last five cases are light multinets. 

\begin{theorem}
\label{classify}
There are 10 possibilities for the combinatorics for induced multinets from $Q_n$. Each possibility exists and is described below. 
\begin{enumerate}
\item If $n>1$, a heavy $(3,2n)$-multinet can have three lines of multiplicity $n$ and remaining lines of multiplicity 1. This is projectively equivalent to the multinets realizing $G(n,1,3)$ discussed in Example \ref{G(n,1,3)}. For $n=2$ this is the $(3,4)$-multinet of Coxeter type $B_3$. \\
\item If $n>1$, a heavy $(3,2n)$-multinet can have a unique line of multiplicity $n$ and all other lines of multiplicity 1. The base $\mathcal{X}$ consists of two points of multiplicity $n$ and all remaining points with multiplicity 1. \\
\item  If $n>1$ is even, a heavy $(3,2n)$-multinet can have three lines of multiplicity 2 and remaining lines of multiplicity 1. The base $\mathcal{X}$ consists of $3n-3$ points of multiplicity 2 and all remaining points of multiplicity 1. For $n=2$ this is the $(3,4)$-multinet of Coxeter type $B_3$. \\
\item If $n>1$ is odd, a heavy $(3,2n)$-multinet can have two lines of multiplicity 2 and remaining lines of multiplicity 1.  The base $\mathcal{X}$ consists of $2n-1$ points of multiplicity 2 and all remaining points of multiplicity 1. \\
\item If $n>1$, a heavy $(3,2n)$-multinet can have an unique line of multiplicity 2 and all other lines of multiplicity 1.  The base $\mathcal{X}$ consists of $n$ points of multiplicity 2 and all other points of multiplicity 1.  \\
\item A light $(3,2n)$-multinet can have a unique point in $\mathcal{X}$ of multiplicity $n$.  All other points in $\mathcal{X}$ have multiplicity 1. For $n=2$ this is the $(3,2)$-net of Coxeter type $A_3$.\\
\item If $n>1$, a light $(3,2n-1)$-multinet can have a unique point in $\mathcal{X}$ of multiplicity $n-1$.  All other points in $\mathcal{X}$ have multiplicity 1. For $n=2$ this gives a $(3,3)$-net realizing $\mathbb{Z}/3\mathbb{Z}$ with each block in general position. \\
\item If $n>2$, a light $(3,2n-2)$-multinet can have a unique point in $\mathcal{X}$ of multiplicity $n-2$.  All other points in the base locus have multiplicity 1. For $n=3$ this gives a $(3,4)$-net realizing $\mathbb{Z}/2\mathbb{Z}\times \mathbb{Z}/2\mathbb{Z}$ with each block having exactly three concurrent lines and fourth line in general position (cf. Example \ref{Z2Z2}). \\
\item If $n>1$, a light $(3,2n)$-multinet can have several points of multiplicity 2 if it does not have points of multiplicity greater than $2$. The number of these points is bounded independently of $n$ by $2^{96}$. \\
\item A light $(3,2n)$-multinet can be a net which realizes the dihedral group of order $2n$.
\end{enumerate}
\end{theorem}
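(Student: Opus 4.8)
The plan is to reduce the classification to a study of how the plane $H$ meets the natural incidence stratification of the dual space $(\mathbb{P}^3)^\vee$ attached to $Q_n$. The subvarieties that govern the combinatorics are the four coordinate vertices $P_0,\dots,P_3$, the six edges $B_{ij}=\{x_i=x_j=0\}$ (each the common base line of one half-block, hence contained in all $n$ of its planes $x_i=\zeta x_j$), the four coordinate faces, and two families of special lines of $Q_n$ identified below. Since $Q_n$ carries the action of $G(n,n,4)$ together with the tetrahedral symmetry permuting the vertices, it suffices to analyze one representative plane in each symmetry class of strata, and in each case to read off the line multiplicities, the base-point multiplicities, and the fixed component, checking the outcome against axioms (i)--(ii) by means of Proposition \ref{properties}.

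First I would determine the line multiplicities. Since two distinct planes of $Q_n$ cut out the same line of $H$ only when that line lies in their intersection, a line of $H$ has multiplicity greater than $1$ in $\mathcal{A}^H$ precisely when $H$ contains a line lying in at least two planes of $Q_n$, and then its multiplicity equals the number of such planes. A short incidence computation classifies these multiple lines into a trichotomy: (A) an edge $B_{ij}$, lying in all $n$ planes of one half-block; (B) a \emph{single-block double line} $\{x_i=\zeta x_j,\ x_k=\xi x_l\}$, lying in exactly two planes of one block; and (C) a \emph{diagonal line} $\{x_0=\zeta x_1=\xi x_2\}$ and its symmetric images, lying in exactly one plane of each of the three blocks. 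Type (A) yields a line of multiplicity $n$, type (B) a line of multiplicity $2$, while type (C) forces a factor common to all three block-fibers and hence a fixed component. Coplanarity of these lines then bounds the heavy configurations: two distinct edges are coplanar only inside a coordinate face, which contains exactly three edges, one per block, so $H$ may contain either one edge (case (2)) or three (case (1), whose restriction is the $G(n,1,3)$-multinet of Example \ref{G(n,1,3)}) but never exactly two; likewise the number of mutually coplanar double lines that fit in a single $H$ is $1$, $2$, or $3$, with a parity condition on $n$ distinguishing cases (5), (4), and (3).

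Next I would treat fixed components and base-point multiplicities. A type-(C) diagonal line necessarily passes through a vertex, and containing one such line lowers the common block-degree by one: removing it leaves a $(3,2n-1)$-multinet whose unique high base point is the image of that vertex, now of multiplicity $n-1$ (case (7)); containing two diagonal lines through the same vertex (possible once $n>2$) gives degree $2n-2$ and a base point of multiplicity $n-2$ (case (8)). If instead $H$ passes through a vertex but contains no edge and no diagonal line, all blocks stay reduced of degree $2n$ and the three half-blocks through that vertex contribute $n$ concurrent lines each, producing a single base point of multiplicity $n$ (case (6)); and a fully generic $H$ meets all $6n$ planes in distinct lines and yields a genuine $(3,2n)$-net, which one identifies with the dihedral net of case (10). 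The identities $\sum_{\ell\in\mathcal{A}_i}m(\ell)=d$ and $\sum_{p}n_p^2=d^2$ from Proposition \ref{properties} then fix the remaining base-point multiplicities and confirm that axiom (i) holds on each stratum.

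Existence in each case is the easy direction: one writes down an explicit plane---a coordinate face for (1), a generic plane through a single vertex for (6), a plane through one or two diagonal lines for (7)--(8), a plane through suitable double lines for (3)--(5), and a generic plane for (10)---and restricts the defining polynomial of $Q_n$ to confirm the listed combinatorics. The main obstacle is exhaustiveness: one must verify that the trichotomy (A)/(B)/(C) together with the coplanarity and parity constraints admits no further combinatorial type, and in particular control case (9), where $H$ is made to pass through several points at which two planes of each block meet, producing base points of multiplicity $2$. I do not expect to reach the stated bound $2^{96}$ by direct computation; rather I would invoke a general structural bound on the number of higher-multiplicity points of a light multinet. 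Checking the equimultiplicity axiom (i) uniformly across the parity-dependent strata---above all, separating case (3) from case (4)---is the remaining delicate point.
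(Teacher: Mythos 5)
The paper does not actually prove Theorem \ref{classify}: it is quoted from \cite{BY} as a summary of results established there, so there is no internal proof to compare against. That said, your strategy --- stratifying the choice of $H$ by its incidence with the vertices and with the lines of $\mathbb{P}^3$ lying in more than one plane of $Q_n$ --- is the framework actually used in \cite{B,BY}, and the pieces you do verify are correct: the trichotomy of multiple lines into edges (multiplicity $n$), within-block double lines (multiplicity $2$), and diagonal lines (one plane per block, hence fixed components); the fact that a coordinate face contains exactly one edge per block, so $H$ contains $0$, $1$, or $3$ edges but never $2$; and the multiplicity bookkeeping at a vertex giving $n$, $n-1$, or $n-2$ according to the number of cancelled diagonal lines. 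The existence half of the theorem is adequately handled by your explicit choices of $H$.

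The exhaustiveness half, which you rightly flag as the main obstacle, is left with gaps that are not routine and on which the whole classification turns. First, you never bound the number of diagonal lines through a vertex that a single admissible $H$ can contain; three or more would produce degrees $2n-3$ or lower, absent from the list, and excluding this is a nontrivial statement about collinearity of points whose coordinates are $n$-th roots of unity, not a formal consequence of your incidence set-up. Second, the parity dichotomy separating cases (3) and (4) --- that a triple of pairwise coplanar double lines, one per block, fits in an admissible $H$ exactly when $n$ is even, while for odd $n$ one can have two but not three --- is asserted rather than derived; it again reduces to solvability of explicit equations in roots of unity. Third, case (9) cannot be reached by your stratification at all: the bound $2^{96}$ is imported from Yuzvinsky's theorem on the number of special fibers in a pencil of curves \cite{Yu2}, which rests on the Beukers--Schlickewei bound for unit equations, so ``a general structural bound'' must be this specific external result. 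Fourth, in case (10) the identification of the Latin square of a generic induced net with the multiplication table of the dihedral group of order $2n$ requires its own computation. Until these points are supplied, you have a sound plan and a correct skeleton, but not a proof of the stated classification.
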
 

\begin{example}
\label{Z2Z2}
Intersect $Q_3$ by the hyperplane $H$ defined by $x_0=(\xi +1)x_1-\xi x_2$ where $\xi$ is a primitive 3rd root of unity.  Then $\mathcal{A}^H$ has two common factors $x_1-x_2$ and $x_1-\xi x_2$. Canceling results in a $(3,4)$-multinet realizing $\mathbb{Z}/2\mathbb{Z}\times \mathbb{Z}/2\mathbb{Z}$. Using a convenient choice of labels for the lines, the three blocks are $\mathcal{A}_1=\{\ell_{11}, \ell_{12}, \ell_{13}, \ell_{14}\}$, $\mathcal{A}_2=\{\ell_{21}, \ell_{22}, \ell_{23}, \ell_{24}\}$, and $\mathcal{A}_3=\{\ell_{31}, \ell_{32}, \ell_{33}, \ell_{34}\}$ where the equations for the lines in $(\mathbb{P}^2)^*$ are 
\begin{displaymath}
\begin{array}{lllll}
\ell_{11}=[0:1:-1] & \phantom{} \qquad& \ell_{21}=[1:0:-1] & \phantom{}\qquad & \ell_{31}=[1:\xi^2:\xi] \\ \ell_{12}=[2\xi:1:0] & \phantom{} & \ell_{22}=[1:0:-\xi] & \phantom{} & \ell_{32}=[1:\xi^2:1] \\ \ell_{13}=[0:1:-\xi] & \phantom{} & \ell_{23}=[1:0:-\xi^2] & \phantom{} & \ell_{33}=[1:-\xi^2:0] \\
\ell_{14}=[0:1:-\xi^2] & \phantom{} & \ell_{24}=[\xi:2:0] & \phantom{} & \ell_{34}=[\xi:1:1]. \\
\end{array}
\end{displaymath}
The base locus consists of sixteen points of multiplicity 1, namely
\begin{displaymath}
\begin{array}{lll}
\ell_{11}\cap\ell_{21}\cap\ell_{31}=[1:1:1] & \phantom{}\qquad & \ell_{13}\cap\ell_{21}\cap\ell_{33}=[1:\xi:1] \\
\ell_{11}\cap\ell_{22}\cap\ell_{32}=[\xi:1:1] & \phantom{} & \ell_{13}\cap\ell_{22}\cap\ell_{34}=[\xi:\xi:1] \\
\ell_{11}\cap\ell_{23}\cap\ell_{33}=[\xi^2:1:1] & \phantom{} & \ell_{13}\cap\ell_{23}\cap\ell_{31}=[\xi^2:\xi:1] \\
\ell_{11}\cap\ell_{24}\cap\ell_{34}=[-2\xi^2:1:1] & \phantom{} &  \ell_{13}\cap\ell_{24}\cap\ell_{32}=[-2:\xi:1] \\
\ell_{12}\cap\ell_{21}\cap\ell_{32}=[1:-2\xi:1] & \phantom{} & \ell_{14}\cap\ell_{21}\cap\ell_{34}=[1:\xi^2:1] \\
\ell_{12}\cap\ell_{22}\cap\ell_{31}=[\xi:-2\xi^2:1] & \phantom{} & \ell_{14}\cap\ell_{22}\cap\ell_{33}=[\xi:\xi^2:1] \\
\ell_{12}\cap\ell_{23}\cap\ell_{34}=[\xi^2:-2:1] & \phantom{} & \ell_{14}\cap\ell_{23}\cap\ell_{32}=[\xi^2:\xi^2:1] \\
\ell_{12}\cap\ell_{24}\cap\ell_{33}=[0:0:1] & \phantom{} & \ell_{14}\cap\ell_{24}\cap\ell_{31}=[-2\xi^2:\xi^2:1]. \\
\end{array}
\end{displaymath}
Using this choice of labels, we can see that this net realizes $\mathbb{Z}/2\mathbb{Z}\times\mathbb{Z}/2\mathbb{Z}$ by observing that the point $\ell_{1i}\cap\ell_{2j}\cap\ell_{3k}\in \mathcal{X}$ appears in the associated Latin square $$\left[\begin{array}{cccc} 1 & 2 & 3 & 4  \\ 2 & 1 & 4 & 3 \\ 3 & 4 & 1 & 2 \\ 4 & 3 & 2 & 1 \end{array}\right]$$as $k$ in the $(i,j)$-th position. Note that each block has exactly three concurrent lines and a fourth line in general position.
\end{example}

\section{Specializations of Multinets and Inducibility from $Q_n$}
\label{sec:3}

Although induced multinets from $Q_n$ provide a wealth of examples of multinets, it is known that not all multinets can be obtained in this manner. For instance, every $(3,2n+1)$-net for $n\in \mathbb{Z}_{>1}$ is not an induced multinet.  An example of a proper multinet which is not induced is given in Problem 4 of \cite{BY}.  Unlike the examples for nets, there is a specialization of this latter example which is an induced multinet from $Q_n$. To be precise, a {\it specialization} of a multinet is any line arrangement in $\mathbb{P}^2$ which satisfies the incidence relations of the given multinet. A multinet may have more than one specialization. That is, there may be line arrangements with nonisomorphic intersection lattices which satisfy the same multinet incidence relations. 

In \cite{St1} Stipins constructs $(k,d)$-nets from $k-2$ mutually orthogonal Latin squares of order $d$. These Latin squares contain the combinatorial data regarding the incidence relations of the associated nets and are used to generate a parameterized family of explicit defining equations. In this section, we explore the possible specializations of several multinets using the results of Stipins and incidence relations of the associated multinets.

\subsection{(3,2)-nets}

 Any $(3,2)$-net in $\mathbb{P}^2$ is projectively equivalent to the arrangement with defining polynomial $[x(y-z)][y(x-z)][z(x-y)]$.  In particular it consists of classes which are all in general position (which coincides with being pencil for $d=2$) and is associated with the Latin square $$\left[\begin{array}{cc} 1 & 2  \\ 2 &  1 \end{array}\right].$$ Thus all $(3,2)$-nets are lattice equivalent, realize $\mathbb{Z}/2\mathbb{Z}$, and have Coxeter type $A_3$. These nets can be induced from $Q_1$. Also, these arrangements are simplicial and denoted $\mathcal{A}(6,1)$ in Gr\"unbaum's catalogue of simplicial arrangements in \cite{G}.

\subsection{(3,3)-nets}
\label{(3,3)-nets}

Up to isotopy, there is a unique Latin square of order 3, namely $$\left[\begin{array}{ccc} 1 & 2 & 3 \\ 2 & 3 & 1 \\ 3 & 1 & 2 \end{array}\right].$$ This is the multiplication table for $\mathbb{Z}/3\mathbb{Z}.$ Assuming that the block $\mathcal{A}_1=\{\ell_{11}, \ell_{12}, \ell_{13}\}$ is in general position, Stipins derived the family of line arrangements in $\mathbb{P}^2$ indexed by $[s_0:s_1]\times [t_0:t_1]\in \mathbb{P}\times \mathbb{P}$ given by 
\begin{displaymath}
\begin{array}{lllll}
\ell_{11}=[1:0:0] & \phantom{}\qquad & \ell_{21}=[1:1:1] & \phantom{}\qquad & \ell_{31}=[s_0:s_1:s_1] \\ \ell_{12}=[0:1:0] & \phantom{} & \ell_{22}=[s_0t_1:s_1t_1:s_1t_0] & \phantom{} & \ell_{32}=[t_0:t_1:t_0] \\ \ell_{13}=[0:0:1] & \phantom{} & \ell_{23}=[s_0t_0:s_0t_1:s_1t_0] & \phantom{} & \ell_{33}=[s_0t_1:s_0t_1:s_1t_0]. \\
\end{array}
\end{displaymath}
This line arrangement is a $(3,3)$-net realizing $\mathbb{Z}/3\mathbb{Z}$ for generic indices with the other blocks given by $\mathcal{A}_2=\{\ell_{21}, \ell_{22}, \ell_{23}\}$ and $\mathcal{A}_3=\{\ell_{31}, \ell_{32}, \ell_{33}\}$.  Note that the nine lines are distinct if and only if $s_0,s_1,t_0,t_1\ne 0$, $s_0\ne s_1$, $t_0\ne t_1$, and  $s_0/s_1\ne t_0/t_1$ . This family can be reindexed by the parameters $\lambda$ and $\mu$ by normalizing the original indices.  That is, put $[s_0:s_1]=[1:s_1/s_0]=[1:\lambda]$ and $[t_0:t_1]=[1:t_1/t_0]=[1:\mu]$ where $\lambda, \mu \ne 0,1$ and $\lambda\ne\mu$. In terms of this reparameterization, the family of line arrangements can be written in $(\mathbb{P}^2)^*$ as
\begin{displaymath}
\begin{array}{lllll}
\ell_{11}=[1:0:0] & \phantom{}\qquad & \ell_{21}=[1:1:1] & \phantom{}\qquad & \ell_{31}=[1:\lambda:\lambda] \\ \ell_{12}=[0:1:0] & \phantom{} & \ell_{22}=[\mu:\lambda \mu:\lambda] & \phantom{} & \ell_{32}=[1:\mu:1] \\ \ell_{13}=[0:0:1] & \phantom{} & \ell_{23}=[1:\mu:\lambda] & \phantom{} & \ell_{33}=[\mu:\mu:\lambda]. \\
\end{array}
\end{displaymath}
Denoting points $\ell_{1i}\cap\ell_{2j}\cap\ell_{3k}\in\mathcal{X}$ as the triple $(i,j,k)$, the nine points of $\mathcal{X}$ are
\begin{displaymath}
\begin{array}{lllll}
(1,1,1)=[0:1:-1] & \phantom{}\qquad & (1,2,2)=[0:1:-\mu] & \phantom{}\qquad & (1,3,3)=[0:\lambda:-\mu] \\ (2,1,2)=[1:0:-1] & \phantom{} & (2,2,3)=[\lambda:0:-\mu] & \phantom{} & (2,3,1)=[\lambda:0:-1] \\ (3,1,3)=[1:-1:0] & \phantom{} & (3,2,1)=[\lambda:-1:0] & \phantom{} & (3,3,2)=[\mu:-1:0]. \\
\end{array}
\end{displaymath}
\noindent The intersections points within the three blocks of this family are 
$$\begin{array}{l}
\ell_{11}\cap\ell_{12} = [0:0:1] \\
\ell_{11}\cap\ell_{13}= [0:1:0]  \\
\ell_{12}\cap \ell_{13}=[1:0:0] \\
\ell_{21}\cap \ell_{22}= [\lambda(1-\mu):\mu-\lambda:\mu(\lambda-1)]   \\
\ell_{21}\cap\ell_{23} = [\lambda-\mu:1-\lambda:\mu-1)]  \\   
\ell_{22}\cap \ell_{23} = [\lambda\mu(\lambda-1):\lambda(1-\mu):\mu(\mu-\lambda)] \\
\ell_{31}\cap\ell_{32}=[\lambda(1-\mu):\lambda-1:\mu-\lambda] \\
\ell_{31}\cap \ell_{33}= [\lambda(\lambda-\mu):\lambda(\mu-1):\mu(1-\lambda)] \\
\ell_{32}\cap\ell_{33} = [\mu(\lambda-1):\mu-\lambda:\mu(1-\mu)].
\end{array}$$
\noindent Each block consists of three lines which can either be a pencil or in general position. Recall it was assumed that the block $\mathcal{A}_1$ is in general position. Direct computations show that the block $\mathcal{A}_2$ is a pencil if and only if $\mu\lambda^2-3\mu \lambda + \lambda+\mu^2=0$. Similarly, $\mathcal{A}_3$ is a pencil if and only if $\lambda \mu^2-3\lambda \mu +\mu+\lambda^2=0.$ If $\mathcal{A}_2$ and $\mathcal{A}_3$ are both pencils, solving the pair of equations for $\mu$ gives $\mu=\lambda$ or $\mu=1$.  However these values for $\mu$ do not define a $(3,3)$-net as noted above. On the other hand, given a generic value for $\lambda$, the equation associated with $\mathcal{A}_2$ can be solved for $\mu$ to find parameters so that $\mathcal{A}_2$ is a pencil and $\mathcal{A}_3$ is in general position (see Figure \ref{fig:1a}). For generic $\lambda$ and $\mu$, both equations will not be satisfied and yields a $(3,3)$-net with all blocks in general position (see Figure \ref{fig:1b}).

\begin{figure}
\centering
\subfigure[Two blocks in general position] 
{
\label{fig:1a}
\includegraphics*[width=2.2in]{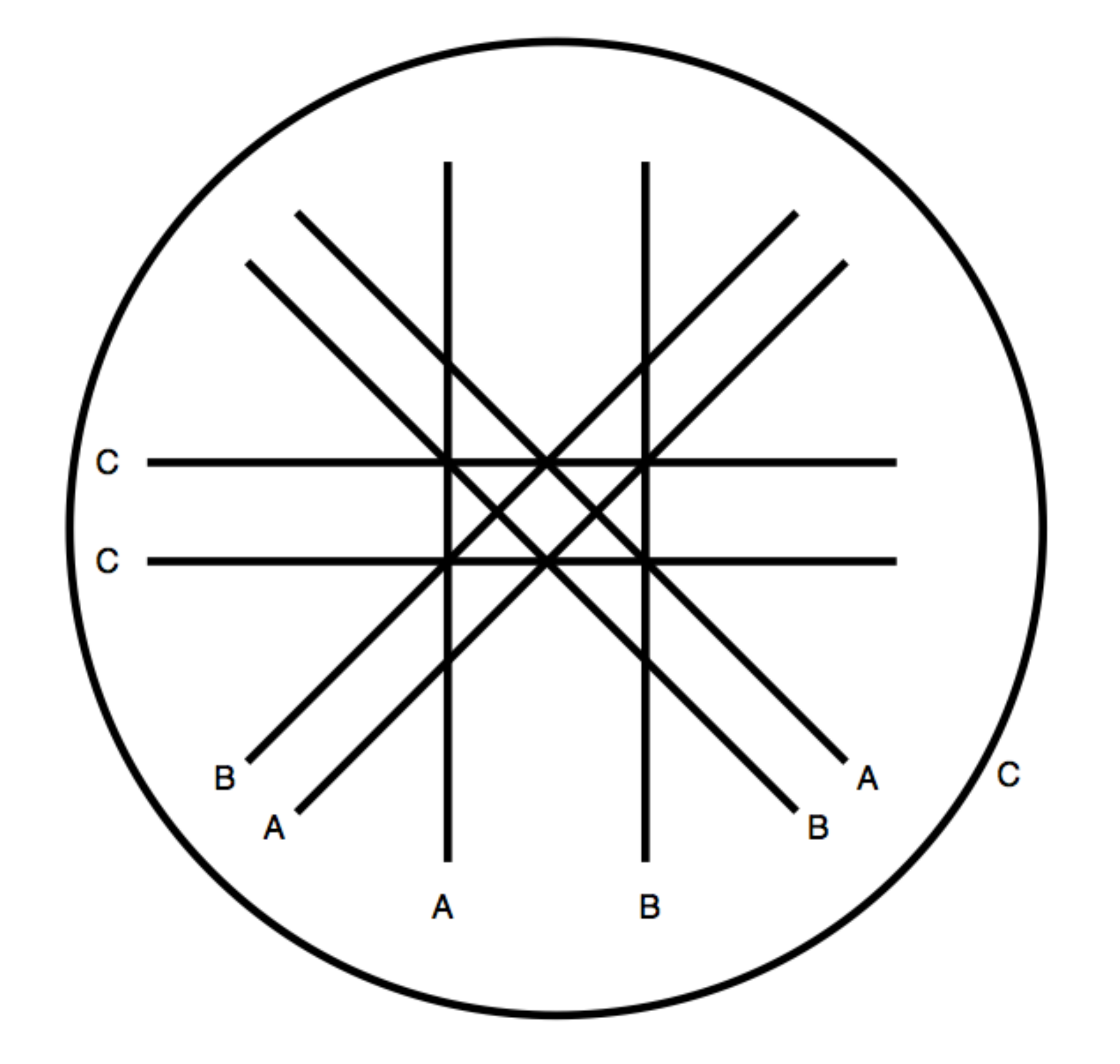}
}
\subfigure[Three blocks in general position]
{
\label{fig:1b}
\includegraphics*[width=2.2in]{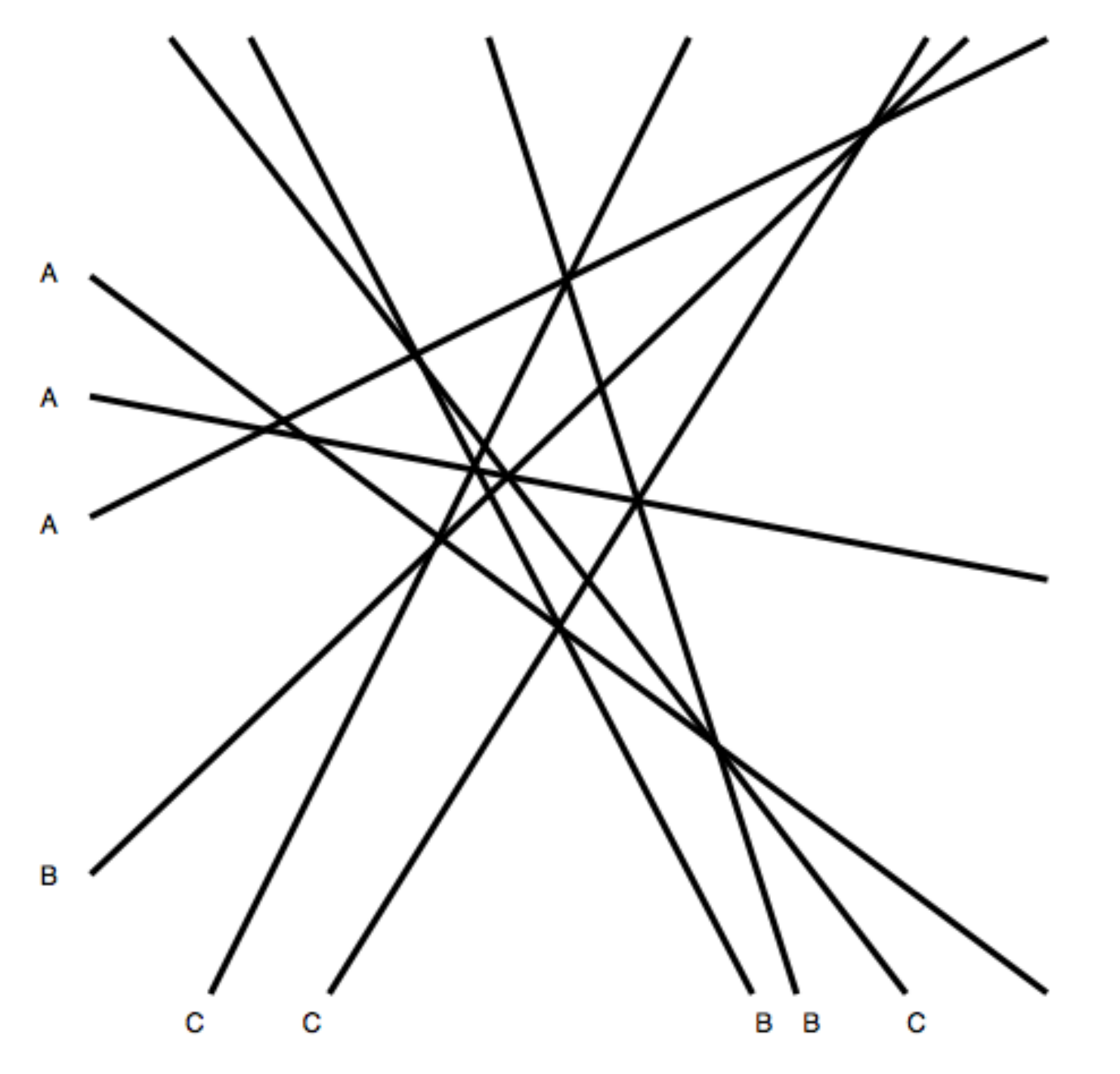}
}
\caption{Equivalent $(3,3)$-nets with different class structures}
\label{fig:1}
\end{figure}

The remaining possible specialization of the $(3,3)$-net realizing $\mathbb{Z}/3\mathbb{Z}$ consists of all blocks being pencils. This configuration is possible and was discussed in Example \ref{Zn}. We summarize our findings in the following result.

\begin{theorem}
Any $(3,3)$-net realizes $\mathbb{Z}/3\mathbb{Z}$ and has exactly one of the following block structures: \\
1. every block is in general position; \\
2. one block is a pencil, two blocks are in general position; \\
3. every block is a pencil.
\end{theorem}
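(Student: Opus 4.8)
The plan is to separate the two assertions of the theorem. For the realization claim I would appeal directly to the fact recorded at the start of this subsection: up to isotopy there is a unique Latin square of order $3$, namely the multiplication table of $\mathbb{Z}/3\mathbb{Z}$. Since a $(3,3)$-net realizes the quasigroup encoded by its Latin square and every order-$3$ quasigroup is isotopic to $\mathbb{Z}/3\mathbb{Z}$, every $(3,3)$-net realizes $\mathbb{Z}/3\mathbb{Z}$. This disposes of the first sentence with no computation.

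For the block structure I would first observe that a block consists of three lines and is therefore either a pencil (three concurrent lines) or in general position (three lines in a triangle), with no other possibility. Thus a $(3,3)$-net is classified by its number of pencil blocks, which lies in $\{0,1,2,3\}$. The three listed cases are precisely $0$, $1$, and $3$ pencils, so it suffices to show that exactly two pencils is impossible and that each of $0$, $1$, $3$ genuinely occurs; the three outcomes are then mutually exclusive by the pencil count and exhaustive, which is what ``exactly one'' asserts.

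The crucial step rules out two pencils. If a net had exactly two pencil blocks, then one block would be in general position; by the symmetry of the three blocks in the definition of a net I may relabel so that this block is $\mathcal{A}_1$ and invoke Stipins' parameterization above. Then $\mathcal{A}_2$ is a pencil exactly when $f:=\mu\lambda^2-3\mu\lambda+\lambda+\mu^2=0$ and $\mathcal{A}_3$ is a pencil exactly when $g:=\lambda\mu^2-3\lambda\mu+\mu+\lambda^2=0$. Rather than solve the system directly I would eliminate by forming the difference, which factors cleanly as
\[
f-g=(\lambda-\mu)(\lambda-1)(\mu-1).
\]
Imposing $f=g=0$ forces one of $\lambda=\mu$, $\lambda=1$, $\mu=1$, each excluded by the constraints $\lambda,\mu\neq 0,1$ and $\lambda\neq\mu$ needed for nine distinct lines. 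Hence at most one of $\mathcal{A}_2,\mathcal{A}_3$ is a pencil, so whenever some block is in general position the net has $0$ or $1$ pencils and two pencils cannot occur.

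Finally I would confirm realizability of the three surviving cases: generic $(\lambda,\mu)$ gives all blocks in general position (Case 1); for generic $\lambda$ the single equation $f=0$ can be solved for $\mu$ so that $\mathcal{A}_2$ is a pencil while $\mathcal{A}_3$ is not (Case 2); and the all-pencils configuration (Case 3) is realized by the Fermat arrangement $[x^3-y^3][x^3-z^3][y^3-z^3]$ of Example \ref{Zn}. The only genuine obstacle is the elimination step, but the fortunate factorization of $f-g$ makes it immediate; the one point requiring care is justifying the relabeling so that the general-position block may be taken to be $\mathcal{A}_1$, which the symmetry of the net definition permits, and noting that Case 3 lies outside Stipins' parameterization (which assumes $\mathcal{A}_1$ general) and so must be supplied separately by Example \ref{Zn}.
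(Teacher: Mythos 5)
Your proposal is correct and follows essentially the same route as the paper: the realization claim via the uniqueness of the order-$3$ Latin square, and the block-structure claim via Stipins' parameterization with the two pencil conditions $\mu\lambda^2-3\mu\lambda+\lambda+\mu^2=0$ and $\lambda\mu^2-3\lambda\mu+\mu+\lambda^2=0$, ruling out two simultaneous pencils and exhibiting the remaining cases (with the all-pencil case supplied by Example \ref{Zn}). Your elimination via the factorization $f-g=(\lambda-\mu)(\lambda-1)(\mu-1)$ is a slightly cleaner way to see the incompatibility that the paper obtains by solving the pair of equations directly, but it is the same argument in substance.
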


In particular, there does not exist a $(3,3)$-net in $\mathbb{P}^2$ with one block in general position and two blocks being pencils. The specialization with all blocks in general position can be induced from $Q_2$ using cancellation.  The other cases are not inducible from $Q_n$. However, the specialization with all classes being pencils appears as a subarrangement of the $(3,6)$-net realizing the dihedral group of order 6 induced from $Q_3$.

\subsection{$(3,2n)$-multinets of type $G(n,1,3)$} 

For each $n\geq 1$, the arrangement defined by $[x^n(y^n-z^n)][y^n(x^n-z^n)][z^n(x^n-y^n)]$ supports a $(3,2n)$-multinet (see example \ref{G(n,1,3)}). This multinet is the projectivization of the reflection arrangement of the monomial group of type $G(n,1,3)$ and thus referred to as being \emph{of type $G(n,1,3)$}. We show below that any specialization of this multinet is projectively equivalent to the arrangement with defining polynomial  $[x^n(y^n-z^n)][y^n(x^n-z^n)][z^n(x^n-y^n)]$. Thus any $(3,2n)$-multinets of type $G(n,1,3)$ are lattice equivalent.

Any specialization of a $(3,2n)$-multinet of type $G(n,1,3)$ consists of three lines of multiplicity $n$ and $3n$ lines of multiplicity 1. The base $\mathcal{X}$ has three points of multiplicity $n$ and $n^2$ points of multiplicity 1. It contains the $(3,n)$-net realizing $\mathbb{Z}/n\mathbb{Z}$ with all blocks being pencils as a subarrangement (see example \ref{Zn}). By Proposition 3.3 of \cite{Ynet}, this subarrangement is projectively equivalent to the arrangement defined by $[x^n-y^n][x^n-z^n][y^n-z^n]$. It remains to determine the possible ways to add the remaining three lines of multiplicity $n$ to this subarrangement and obtain a multinet of type $G(n,1,3)$. 

The three points of multiplicity $n$ are the intersection of two lines of multiplicity $n$ and $n$ lines of multiplicity 1. Thus these three points are the common intersection point of each block, namely $[1:0:0]$, $[0:1:0]$ and $[0:0:1]$. The lines of multiplicity $n$ are the three lines which pass through exactly two of these points, namely $x$, $y$, and $z$. Taking multiplicity into account, we can make the following conclusion. 

\begin{theorem}
Any $(3,2n)$-multinet of type $G(n,1,3)$ is projectively equivalent to the arrangement with defining polynomial $[x^n(y^n-z^n)][y^n(x^n-z^n)][z^n(x^n-y^n)].$ 
\end{theorem}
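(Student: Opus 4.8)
The plan is to strip the arrangement down to its rigid light skeleton, normalize that skeleton, and then argue that the three heavy lines have nowhere to go but the coordinate axes. First I would separate the $3n$ reduced lines from the three lines of multiplicity $n$. By the incidence data defining the type $G(n,1,3)$ combinatorics, the reduced lines fall into three pencils of $n$ concurrent lines, one per block, so on their own they form a $(3,n)$-net realizing $\mathbb{Z}/n\mathbb{Z}$ with every block a pencil. Applying Proposition 3.3 of \cite{Ynet}, I would then choose projective coordinates in which this light subarrangement is exactly $[x^n-y^n][x^n-z^n][y^n-z^n]$; the three pencil centers become $[1:0:0]$, $[0:1:0]$, and $[0:0:1]$, and all continuous projective freedom is spent. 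Since the target polynomial is symmetric under simultaneously permuting $\{x,y,z\}$ and the three blocks, any relabeling introduced by the normalization is harmless.

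Next I would locate the heavy lines in these coordinates using condition (i). At a point $p$ of multiplicity $n$, the block whose pencil is centered at $p$ already contributes all of $n_p$ through its $n$ reduced lines; condition (i) therefore forces each of the other two blocks to supply its full $n$ at $p$ through a single line, which must be a line of multiplicity $n$. Conversely, a block's heavy line cannot pass through that block's own pencil center, since the contribution there would then be $2n$ rather than $n$. Counting incidences, each heavy line meets exactly the two pencil centers belonging to the other two blocks. As the only line through two of $[1:0:0]$, $[0:1:0]$, $[0:0:1]$ is a coordinate line, the heavy line of the block centered at $[1:0:0]$ must be $x=0$, and likewise the other two heavy lines are $y=0$ and $z=0$. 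Reassembling the light and heavy parts recovers $[x^n(y^n-z^n)][y^n(x^n-z^n)][z^n(x^n-y^n)]$, with the degenerate case $n=1$ already covered by the discussion of $(3,2)$-nets.

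The step I expect to require the most care is the forcing argument of the second paragraph: one must check that condition (i), read at each pencil center in turn, simultaneously pins down both the location of each heavy line and its block label, leaving no surviving moduli. The conceptual crux is that all the flexibility of the configuration is absorbed by Yuzvinsky's rigidity result for the light net, after which the heavy lines are completely determined; the remaining work is elementary incidence bookkeeping rather than any substantial computation. The one place where the hypothesis of type $G(n,1,3)$ is genuinely indispensable is in guaranteeing that the reduced lines really do form three pencils, for only then is the light part a net to which Proposition 3.3 of \cite{Ynet} applies.
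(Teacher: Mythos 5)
Your proposal is correct and follows essentially the same route as the paper: both strip off the three heavy lines, identify the reduced lines as a $(3,n)$-net with all blocks pencils, invoke Proposition 3.3 of Yuzvinsky to normalize it to $[x^n-y^n][x^n-z^n][y^n-z^n]$, and then observe that the three multiplicity-$n$ lines must be the coordinate lines through pairs of the pencil centers $[1:0:0]$, $[0:1:0]$, $[0:0:1]$. Your second paragraph merely spells out in more detail the incidence forcing that the paper states as a one-line consequence of the combinatorial type.
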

These multinets are inducible from $Q_n$ (see subsection \ref{induced}).

\subsection{Light $(3,4)$-multinet with unique double point}
 Next we turn our attention to specializations of the light $(3,4)$-multinet with a unique point of multiplicity 2 and all other points of multiplicity 1 in the base $\mathcal{X}$. Two such examples with different block structures are presented in Figure \ref{fig:2}.  The specialization in Figure \ref{fig:2a} is the simplicial arrangement known as $\mathcal{A}(12,1)$ in Gr\"unbaum's catalogue of simplicial arrangements in \cite{G}. The specialization with two blocks in general position in Figure \ref{fig:2b} first appeared in \cite{FY}.   

\begin{figure}
\centering
\subfigure[No blocks in general position] 
{
\label{fig:2a}
\includegraphics*[width=2.1in]{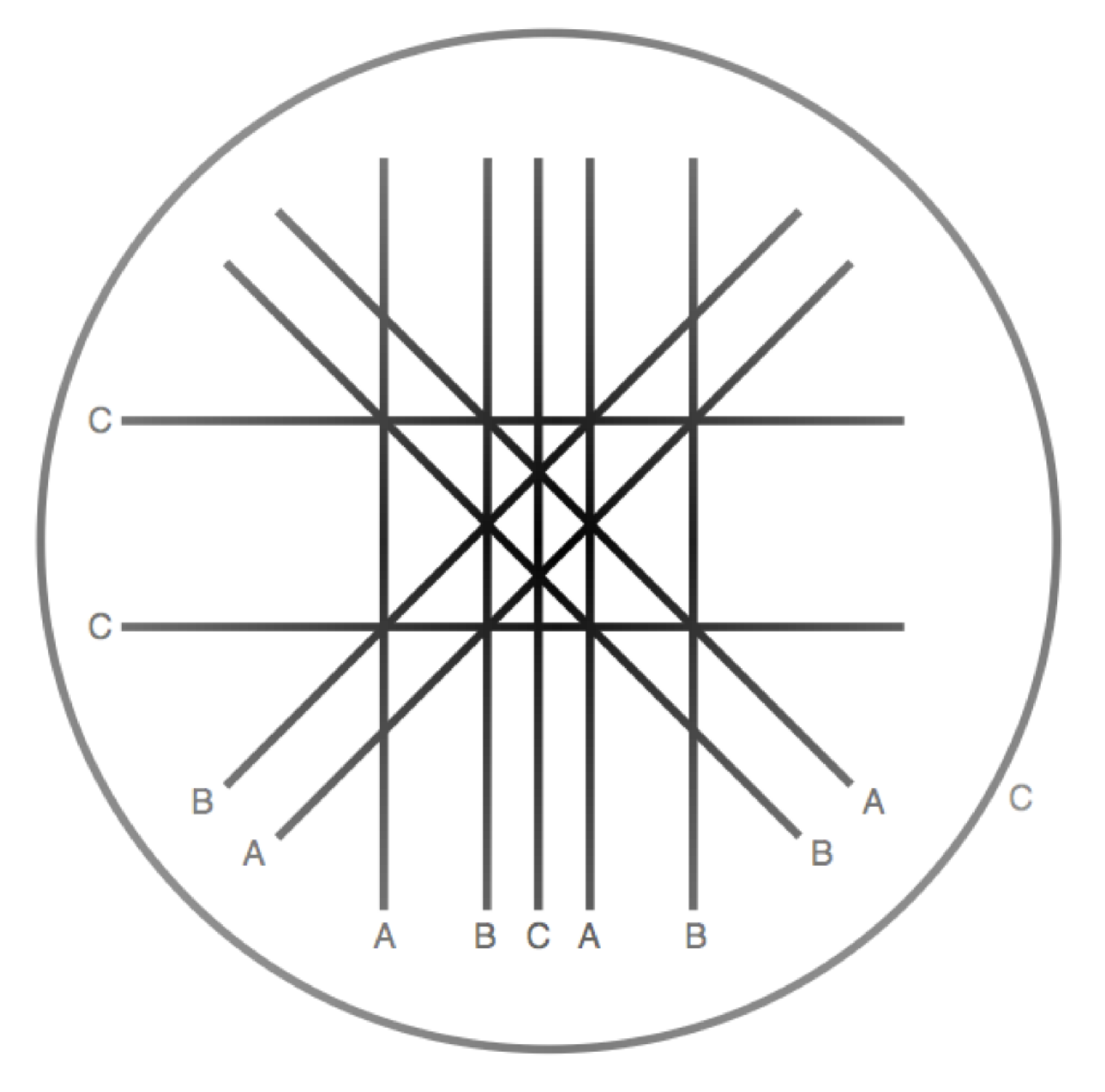}
}
\subfigure[Two blocks in general position]
{
\label{fig:2b}
\includegraphics*[width=2.25in]{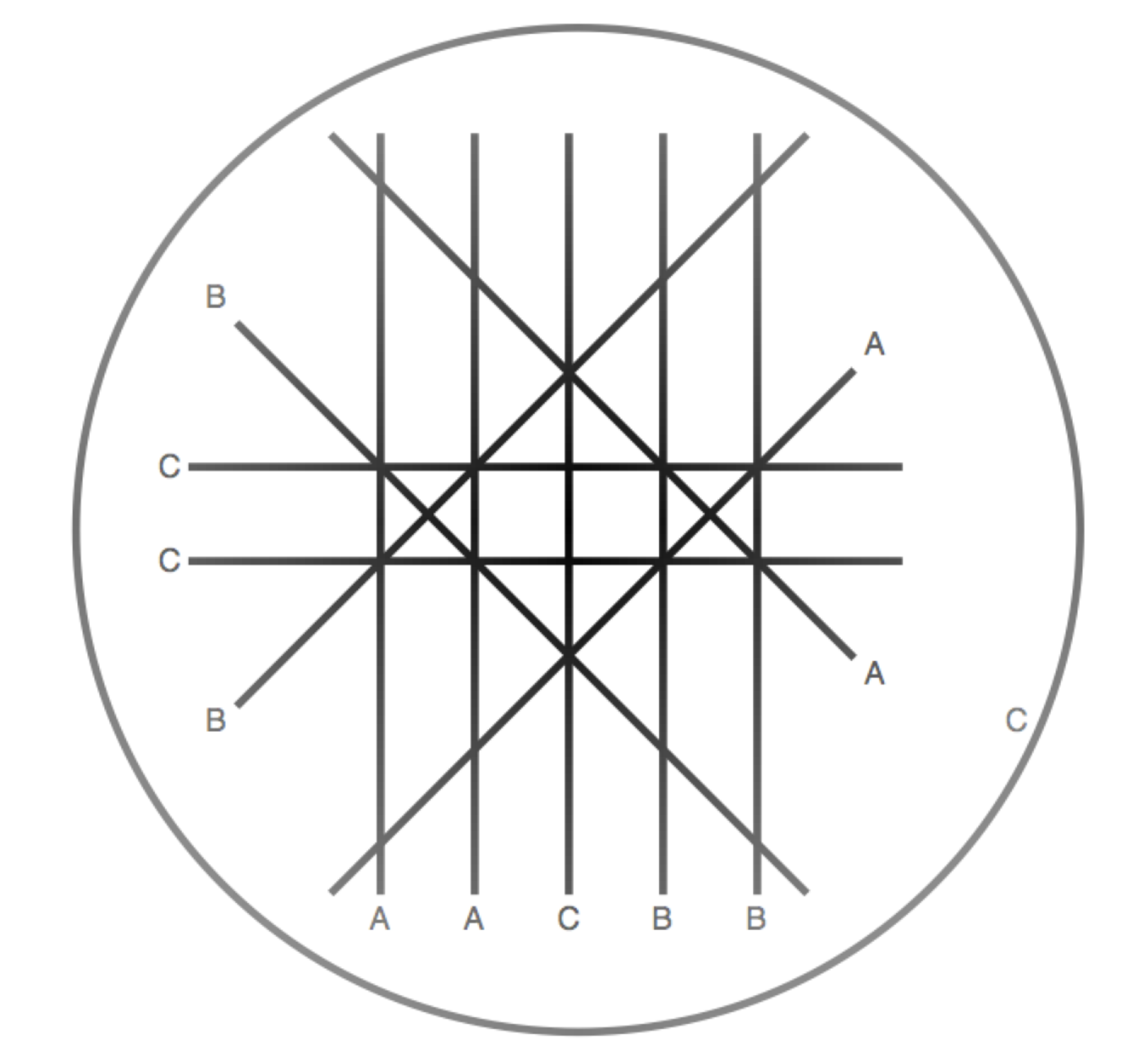}
}
\caption{Equivalent light and proper $(3,4)$-multinets}
\label{fig:2}
\end{figure}

Here each block consists of four lines. There are three possibilities for each block: (1) the four lines are a pencil; (2) the lines are in general position; or (3) exactly three of the four lines meet at a common point. A block in the latter configuration is said to be an \emph{easel} due to its resemblence to an artists' easel.  

We can eliminate the first possibility with the following lemma.

\begin{lemma}
If a light and proper multinet has a block which is a pencil, then it is the trivial multinet with $|\mathcal{X}|=1$.
\end{lemma}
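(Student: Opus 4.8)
The plan is to split the argument according to whether the common point of the pencil block lies in the base $\mathcal{X}$, and to let B\'ezout's identity (Proposition \ref{properties}(3)) force the dichotomy: either the multinet is a full net, which properness forbids, or the base collapses to a single point. First I would fix notation. Let $\mathcal{A}_1$ be the block that is a pencil and let $P$ be the common point of its lines. Since the multinet is light, every $m(\ell)=1$, so by Proposition \ref{properties}(1) each block consists of exactly $d$ lines, and for each $p\in\mathcal{X}$ the multiplicity $n_p$ is simply the number of lines of any one block passing through $p$. The whole argument rests on the elementary fact that two distinct lines through $P$ meet only at $P$, so no point other than $P$ lies on more than one line of $\mathcal{A}_1$.

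Next I would dispose of the case $P\notin\mathcal{X}$. Each $p\in\mathcal{X}$ satisfies $n_p\ge 1$ (as $p$ is a crossing of lines from different blocks, independence of $n_p$ forces $p$ to lie on a line of every block, in particular of $\mathcal{A}_1$); since $p\neq P$, it lies on exactly one line of $\mathcal{A}_1$ by the observation above, so $n_p=1$ for every $p\in\mathcal{X}$. B\'ezout's identity then gives $|\mathcal{X}|=\sum_{p\in\mathcal{X}}n_p^2=d^2$, so the multinet is a net. This contradicts properness; indeed this is exactly the Fermat situation of Example \ref{Zn}, where the pencil point lies off the base. Hence $P\notin\mathcal{X}$ cannot occur.

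It then remains to handle $P\in\mathcal{X}$. All $d$ lines of $\mathcal{A}_1$ pass through $P$, so $n_P=d$. By independence of $n_P$ from the block, every block has $d$ lines through $P$; but each block has only $d$ lines in all, so every block is a pencil centered at $P$, and thus all lines of the arrangement pass through $P$. Any two lines from different blocks then meet only at $P$, whence $\mathcal{X}=\{P\}$ and $|\mathcal{X}|=1$. This is the trivial multinet of Example \ref{trivial} (the connectivity condition further forcing each block to be a single line, i.e. $d=1$).

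I expect the only genuinely delicate point to be recognizing precisely where the proper hypothesis enters: the geometric and combinatorial steps are all short, but properness is exactly what eliminates the net arising when $P\notin\mathcal{X}$. The crux is therefore to organize the case split so that B\'ezout's identity cleanly separates the ``net'' outcome from the ``single base point'' outcome, rather than any hard computation.
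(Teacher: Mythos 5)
Your proof is correct and follows essentially the same route as the paper's: the paper's one-line justification that the pencil's center must lie in $\mathcal{X}$ "because the multinet is light and proper" is exactly your first case (properness plus B\'ezout's identity rules out $P\notin\mathcal{X}$), and the collapse of $\mathcal{X}$ to $\{P\}$ once $P\in\mathcal{X}$ is the paper's concluding step. You have merely written out in full the details the paper leaves implicit.
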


\begin{proof}
Let $p$ be the common intersection point from the block which is a pencil. Then $p\in\mathcal{X}$ because the multinet is light and proper. It follows that $p$ lies on every line from the other two blocks, hence $\mathcal{X}=\{p\}$. This is the trivial multinet. \end{proof}

It follows that each block is either (1) in general position with six double points or (2) an easel with a unique triple point and 3 double points. To determine the specializations possible for this multinet, we use the multinet incidence structure to compute the realization space. 

We may assume the unique point $p\in \mathcal{X}$ with multiplicity 2 has coordinates $[0:0:1]$ in $\mathbb{P}^2$. Since no block is a pencil, we may choose coordinates on $(\mathbb{P}^2)^*$ and labels for the lines in the each block, namely $\mathcal{A}_1=\{\ell_{11}, \ell_{12}, \ell_{13}, \ell_{14}\}$, $\mathcal{A}_2=\{\ell_{21}, \ell_{22}, \ell_{23}, \ell_{24}\}$, and $\mathcal{A}_3=\{\ell_{31}, \ell_{32}, \ell_{33}, \ell_{34}\}$, so that 
\begin{displaymath}
\begin{array}{lllll}
\ell_{11}=[1:1:1] & \phantom{}\qquad & \ell_{21}=[1:0:0]  & \phantom{}\qquad & \ell_{31}=[s_0:s_1:s_2]\\
\ell_{12}=[0:1:\lambda] & \phantom{} & \ell_{22}=[0:1:0] & \phantom{} & \ell_{32}=[0:1:t] \\
\ell_{13}=[0:1:\mu] & \phantom{} & \ell_{23}=[0:0:1] & \phantom{} & \ell_{33}=[0:1:u] \\
\ell_{14}=[x_0:x_1:x_2] & \phantom{} & \ell_{24}=[y_0:y_1:y_2] & \phantom{} & \ell_{34}=[v_0:v_1:v_2].
\end{array}
\end{displaymath}
Here we assumed that $p=\ell_{12}\cap\ell_{13}\cap\ell_{22}\cap\ell_{23}\cap\ell_{32}\cap\ell_{33}$. Using the incidence relations imposed by the multinet structure, we can express this family of line arrangements in terms of the two parameters $\lambda$ and $\mu$. The blocks $\mathcal{A}_1$ and $\mathcal{A}_2$ can be used to compute the coordinates for the twelve points of $\mathcal{X}$ with multiplicity 1 as
\begin{displaymath}
\begin{array}{lll}
\ell_{11}\cap\ell_{21}=[0:1:-1] & \phantom{}\qquad & \ell_{13}\cap\ell_{21}=[0:\mu:-1] \\
\ell_{11}\cap\ell_{22}=[1:0:-1] & \phantom{} & \ell_{13}\cap\ell_{24}=[\mu y_1-y_2:-\mu y_0:y_0] \\
\ell_{11}\cap\ell_{23}=[1:-1:0] & \phantom{} & \ell_{14}\cap\ell_{21}=[0:x_2:-x_1] \\
\ell_{11}\cap\ell_{24}=[y_1-y_2:y_2-y_0:y_0-y_1] & \phantom{} &  \ell_{14}\cap\ell_{22}=[x_2:0:-x_0] \\
\ell_{12}\cap\ell_{21}=[0:\lambda:-1] & \phantom{} & \ell_{14}\cap\ell_{23}=[x_1:-x_0:0] \\
\ell_{12}\cap\ell_{24}=[\lambda y_1-y_2:-\lambda y_0:y_0] & \phantom{} & \ell_{14}\cap\ell_{24}=[z_0:z_1:z_2] \\
\end{array}
\end{displaymath}
where 
$$\begin{array}{l}
z_0=x_2y_1-x_1y_2 \\
z_1=x_0y_2-x_2y_0 \\
z_2=x_1y_0-x_0y_1.
\end{array}$$
Consider the line $\ell_{32}$ and its intersections with lines from $\mathcal{A}_1$ and $\mathcal{A}_2$. Each intersection point lies in $\mathcal{X}$. The double point $p$ is the intersection with $\ell_{12}, \ell_{13}, \ell_{22},$ and $\ell_{23}$. It follows that $\ell_{32}$ passes through either $\ell_{11}\cap\ell_{21}$ and $\ell_{14}\cap\ell_{24}$, or $\ell_{11}\cap\ell_{24}$ and $\ell_{14}\cap\ell_{21}$. We may choose our labels so that $\ell_{11}\cap\ell_{21}$ and $\ell_{14}\cap\ell_{24}$ lie on $\ell_{32}$ which implies $t=1$. It follows that $\ell_{11}\cap\ell_{24}$ and $\ell_{14}\cap\ell_{21}$ lie on $\ell_{33}$.

Next consider the line $\ell_{22}$. A similiar argument shows $\ell_{22}$ passes through either $\ell_{11}\cap\ell_{31}$ and $\ell_{14}\cap\ell_{34}$, or $\ell_{11}\cap\ell_{34}$ and $\ell_{14}\cap\ell_{31}$. We may choose our labels so that $\ell_{11}\cap\ell_{31}$ and $\ell_{14}\cap\ell_{34}$ lie on $\ell_{22}$. It follows that  $\ell_{11}\cap\ell_{34}$ and $\ell_{14}\cap\ell_{31}$ lie on $\ell_{23}$. The incidence relations for twelve points in $\mathcal{X}$ of multiplicity 1 based on these choice of labels are
\begin{displaymath}
\begin{array}{lllll}
\ell_{11}\cap\ell_{21}\cap\ell_{32}& \phantom{}\qquad & \ell_{12}\cap\ell_{21}\cap\ell_{31} & \phantom{}\qquad &  \ell_{14}\cap\ell_{21}\cap\ell_{33} \\
\ell_{11}\cap\ell_{22}\cap\ell_{31} & \phantom{} & \ell_{12}\cap\ell_{24}\cap\ell_{34}  & \phantom{} & \ell_{14}\cap\ell_{22}\cap\ell_{34}  \\
\ell_{11}\cap\ell_{23}\cap\ell_{34}& \phantom{} & \ell_{13}\cap\ell_{21}\cap\ell_{34} & \phantom{} &  \ell_{14}\cap\ell_{23}\cap\ell_{31} \\
\ell_{11}\cap\ell_{24}\cap\ell_{33} & \phantom{} & \ell_{13}\cap\ell_{24}\cap\ell_{31}  & \phantom{} &\ell_{14}\cap\ell_{24}\cap\ell_{32}.\\
\end{array}
\end{displaymath}
It is straightforward to use these incidence relations to express this family in terms of the parameters $\lambda$ and $\mu$, namely
\begin{displaymath}
\begin{array}{lllll}
\ell_{11}=[1:1:1] & \phantom{}\qquad & \ell_{21}=[1:0:0] & \phantom{}\qquad & \ell_{31}=[\lambda:1:\lambda] \\ \ell_{12}=[0:1:\lambda] & \phantom{} & \ell_{22}=[0:1:0] & \phantom{} & \ell_{32}=[0:1:1] \\ \ell_{13}=[0:1:\mu] & \phantom{} & \ell_{23}=[0:0:1] & \phantom{} & \ell_{33}=[1:1:\lambda\mu] \\
\ell_{14}=[\lambda:1:\lambda\mu] & \phantom{} & \ell_{24}=[\lambda:1+\lambda:\lambda(1+\mu)] & \phantom{} & \ell_{34}=[1:1:\mu]. \\
\end{array}
\end{displaymath}
The base $\mathcal{X}$ consists of thirteen points. The twelve points of multiplicity 1 are
\begin{displaymath}
\begin{array}{lll}
\ell_{11}\cap\ell_{21}\cap\ell_{32}=[0:1:-1] & \phantom{}\qquad & \ell_{13}\cap\ell_{21}\cap\ell_{34}=[0:\mu:-1] \\
\ell_{11}\cap\ell_{22}\cap\ell_{31}=[1:0:-1] & \phantom{} & \ell_{13}\cap\ell_{24}\cap\ell_{31}=[\lambda-\mu:\lambda\mu:-\lambda] \\
\ell_{11}\cap\ell_{23}\cap\ell_{34}=[1:-1:0] & \phantom{} & \ell_{14}\cap\ell_{21}\cap\ell_{33}=[0:\lambda\mu:-1] \\
\ell_{11}\cap\ell_{24}\cap\ell_{33}=[1-\lambda\mu:\lambda\mu:-1] & \phantom{} &  \ell_{14}\cap\ell_{22}\cap\ell_{34}=[\mu:0:-1] \\
\ell_{12}\cap\ell_{21}\cap\ell_{31}=[0:\lambda:-1] & \phantom{} & \ell_{14}\cap\ell_{23}\cap\ell_{31}=[1:-\lambda:0] \\
\ell_{12}\cap\ell_{24}\cap\ell_{34}=[\lambda-\mu:-\lambda:1] & \phantom{} & \ell_{14}\cap\ell_{24}\cap\ell_{32}=[\lambda\mu-1:\lambda:-\mu] \\
\end{array}
\end{displaymath}
and the point of multiplicity 2 is 
$$\ell_{12}\cap\ell_{13}\cap\ell_{22}\cap\ell_{23}\cap\ell_{32}\cap\ell_{33}=[1:0:0].$$
For the twelve lines and thirteen points of $\mathcal{X}$ to all be distinct, the parameters must satisfy the following conditions: $\lambda\ne \mu$, $\lambda\mu\ne 1$, and $\lambda,\mu\ne 0,1$. Observe that the intersection points within each block are
\begin{displaymath}
\begin{array}{lll}
\ell_{11}\cap\ell_{12}=[1-\lambda:\lambda:-1] & \phantom{}\qquad & \ell_{22}\cap\ell_{23}=[1:0:0] \\
\ell_{11}\cap\ell_{13}=[1-\mu:\mu:-1] & \phantom{} &\ell_{22}\cap\ell_{24}=[\lambda+\lambda\mu:0:-\lambda] \\
\ell_{11}\cap\ell_{14}=[\lambda\mu-1:\lambda-\lambda\mu:1-\lambda] & \phantom{} & \ell_{23}\cap\ell_{24}=[1+\lambda:-\lambda:0] \\
\ell_{12}\cap\ell_{13}=[1:0:0] & \phantom{} & \ell_{31}\cap\ell_{32}=[\lambda-1:\lambda:-\lambda] \\ 
\ell_{12}\cap\ell_{14}=[1-\mu:-\lambda:1] & \phantom{} & \ell_{31}\cap\ell_{33}=[\mu-1:-\lambda\mu:1] \\
\ell_{13}\cap\ell_{14}=[\mu-\lambda\mu:-\lambda\mu:\lambda] & \phantom{} &  \ell_{31}\cap\ell_{34}=[\lambda-\mu:\lambda\mu-\lambda:1-\lambda] \\
\ell_{21}\cap\ell_{22}=[0:0:1] & \phantom{} & \ell_{32}\cap\ell_{33}=[1:0:0] \\
\ell_{21}\cap\ell_{23}=[0:1:0] & \phantom{} & \ell_{32}\cap\ell_{34}=[1-\mu:-1:1] \\
\ell_{21}\cap\ell_{24}=[0:\lambda+\lambda\mu:-\lambda-1] & \phantom{} & \ell_{33}\cap\ell_{34}=[\lambda\mu-\mu:-\lambda\mu:1].\\
\end{array}
\end{displaymath}

Recall that each block is either (1) in general position with six double points or (2) an easel with a unique triple point and 3 double points. The block $\mathcal{A}_1$ is in general position except if $\mu=2-\lambda$ or $\mu=\lambda/(2\lambda-1)$; $\mathcal{A}_2$ is in general position unless $\lambda=-1$ or $\mu=-1$; and $\mathcal{A}_3$ is in general position unless $\mu=1/(2-\lambda)$ or $\mu=(2\lambda-1)/\lambda$. All three blocks are in general position for generic choice of $\lambda$ and $\mu$. Exactly two blocks are in general position by choosing $\lambda=-1$ and a generic value of $\mu$ (see Figure \ref{fig:2b}). 

To resolve the remaining cases, choose $\lambda=-1$. Then $\mathcal{A}_1$ is in general position unless $\mu=3$ or $\mu=1/3$. Either choice of values for the parameters also satisfies the relation for $\mathcal{A}_3$, hence no blocks are in general position (see Figure \ref{fig:2a}). It is straightforward to verify that if any two blocks are easels, then the third block is also an easel. This gives us the following result.

\begin{theorem}
Any light $(3,4)$-multinet with base $\mathcal{X}$ consisting of a unique double point and all other points of multiplicity 1 has exactly one of the following blocks structures: \\
1. every block is in general position; \\
2. one block is an easel, two blocks are in general position; \\
3. every block is an easel.
\end{theorem}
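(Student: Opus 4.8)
The plan is to reduce the classification to a finite check on the two-parameter realization space set up above. First I would invoke the preceding lemma. The present multinet is light, and it is proper since its base has thirteen points while $d^2=16$; as $|\mathcal{X}|=13\neq 1$, the lemma forbids any block from being a pencil. Hence each of the three blocks is either in general position or an easel, so the global block structure is determined solely by how many of the three blocks are easels, leaving four a priori possibilities (zero, one, two, or three easels). The theorem then amounts to showing that the ``two easels'' possibility never occurs, while the other three are all realized.

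Realizability I would read off from the parameterization by $(\lambda,\mu)$ together with the easel conditions already recorded, namely that $\mathcal{A}_1$ is an easel exactly when $\mu=2-\lambda$ or $\mu=\lambda/(2\lambda-1)$, that $\mathcal{A}_2$ is an easel exactly when $\lambda=-1$ or $\mu=-1$, and that $\mathcal{A}_3$ is an easel exactly when $\mu=1/(2-\lambda)$ or $\mu=(2\lambda-1)/\lambda$. Each condition cuts out a proper subvariety of the $(\lambda,\mu)$-plane, so a generic admissible pair avoids all of them and gives the all-general-position configuration (case 1). Fixing $\lambda=-1$ forces $\mathcal{A}_2$ to be an easel while, for generic $\mu$, both $\mathcal{A}_1$ and $\mathcal{A}_3$ stay in general position (case 2). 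Taking $\lambda=-1$ together with $\mu=3$ (or $\mu=1/3$) turns all three blocks into easels (case 3).

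The heart of the argument, and the step I expect to require the most care, is excluding exactly two easels by showing that any two of the easel conditions force the third. I would verify this by intersecting the conditions pairwise. Combining the $\mathcal{A}_1$ and $\mathcal{A}_3$ conditions reduces, in each of the four sub-cases, to a quadratic in $\lambda$ (such as $(2-\lambda)^2=1$, $\lambda^2=1$, or $\lambda^2=(2\lambda-1)^2$), whose only admissible roots $\lambda=-1$, $\lambda=3$ with $\mu=-1$, and $\lambda=1/3$ with $\mu=-1$ all satisfy $\lambda=-1$ or $\mu=-1$, hence make $\mathcal{A}_2$ an easel as well. Pairing the $\mathcal{A}_2$ condition with either of the others is even more direct: substituting $\lambda=-1$ pins $\mu$ to $\{3,1/3\}$, and substituting $\mu=-1$ pins $\lambda$ to $\{3,1/3\}$, and in each instance these are precisely the parameter values that also trigger the remaining block's easel condition. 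Carrying out all three pairings, or checking one and invoking the $S_3$-symmetry permuting the blocks of a $3$-multinet, shows that two easels always force the third, so the two-easel configuration is empty.

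Finally I would assemble the pieces: the lemma restricts each block to general position or easel; the pairwise computation eliminates the two-easel case; and the explicit parameter choices exhibit the remaining three. The only genuine subtlety beyond bookkeeping is confirming that the forced parameter values stay inside the distinctness locus $\lambda\neq\mu$, $\lambda\mu\neq 1$, $\lambda,\mu\neq 0,1$, so that the configurations in cases 2 and 3 are honest multinets and not degenerations in which lines or base points collide.
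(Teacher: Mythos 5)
Your proposal is correct and follows essentially the same route as the paper: the preceding lemma rules out pencil blocks, the $(\lambda,\mu)$-parameterization realizes cases 1--3 by generic and special parameter choices, and the two-easel configuration is excluded by showing any two easel conditions force the third. The only difference is that you spell out the pairwise intersection computation that the paper dismisses as ``straightforward to verify,'' and your arithmetic there (the admissible solutions $(\lambda,\mu)=(-1,3)$, $(-1,1/3)$, $(3,-1)$, $(1/3,-1)$, all satisfying every easel condition) checks out.
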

The specialization with all blocks in general position can be induced from $Q_2$. It is also possible to induce the specialization with every block being an easel from $Q_3$ using double cancellation. On the other hand, it is not possible to induce from $Q_n$ the specialization with exactly two blocks in general position.

\section{Complete Multinets}
\label{sec:4}

A Riemann-Hurwitz type formula was obtained for multinets in \cite{FY} by calculating the Euler characteristic of the blowup of $\mathbb{P}^2$ at the points of $\mathcal{X}$ using the Ceva pencil. This formula can be used to determine whether all singular fibers of a Ceva pencil associated to a multinet are completely reducible. In this case, the complement of the arrangement is aspherical and the multinet is referred to as a $K(\pi,1)$-arrangement.

\subsection{Classification of complete $3$-nets} 
We recall the definition of complete multinets and results obtained in \cite{FY}. Then we present and establish our main result, namely the classification of complete $3$-nets.

We begin by introducing some additional notation. Let $P_\mathcal{A}$ be the set of intersection points of $\mathcal{A}$. Let $\overline{\mathcal{X}}$ denote the set of intersection points of $\mathcal{A}$ not contained in $\mathcal{X}$. Thus $P_{\mathcal{A}}=\overline{\mathcal{X}}\sqcup\mathcal{X}$ and $\overline{\mathcal{X}}\cap P_{\mathcal{A}_i}$ is the set of intersection points of the block $\mathcal{A}_i$ not contained in $\mathcal{X}$. For $p\in \overline{\mathcal{X}}$, let $m_p$ be the multiplicity of $p$ in $\mathcal{A}$. The next two results and subsequent definition were introduced in \cite{FY}.
\begin{theorem}
Let $\mathcal{A}$ be a $(k,d)$-multinet, and let $\pi: \mathbb{P}^2\rightarrow \mathbb{P}^1$ be the associated Ceva pencil. Then 
\begin{eqnarray}
\label{E1}
3+|\mathcal{X}| & \geq & (2-k)[3d-d^2+\sum_{p\in \mathcal{X}}(n_p^2-n_p)]+2|\mathcal{A}|-\sum_{p\in \overline{\mathcal{X}}}(m_p-1) 
\end{eqnarray}
with equality if and only if the blocks of $\mathcal{A}$ form the only singular fibers of $\pi$. 
\end{theorem}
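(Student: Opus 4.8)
The plan is to resolve the Ceva pencil into a genuine morphism and then compute the topological Euler characteristic of the total space in two ways, balancing the global count against the fibers. Concretely, I would blow up $\mathbb{P}^2$ at the $|\mathcal{X}|$ base points to obtain a smooth surface $Y$ together with a morphism $\hat\pi\colon Y \to \mathbb{P}^1$ extending $\pi$. Since blowing up a single smooth point raises the Euler characteristic by $1$, the left-hand side is immediately accounted for: $\chi(Y) = \chi(\mathbb{P}^2) + |\mathcal{X}| = 3 + |\mathcal{X}|$. The right-hand side should then emerge from the additivity of the Euler characteristic over the fibration,
\begin{equation*}
\chi(Y) = \chi(\mathbb{P}^1)\,\chi(F) + \sum_{s}\bigl(\chi(F_s) - \chi(F)\bigr) = 2\chi(F) + \sum_{s}\bigl(\chi(F_s)-\chi(F)\bigr),
\end{equation*}
where $F$ is a generic fiber and the sum runs over the singular (reduced) fibers $F_s$.

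First I would check that one blow-up at each $p \in \mathcal{X}$ suffices: two generating completely reducible fibers $C_1,C_2$ meet at $p$ with local intersection multiplicity $n_p^2$ (this underlies the B\'ezout count $\sum n_p^2 = d^2$ of Proposition \ref{properties}), while each has an $n_p$-fold point there, so their strict transforms become locally disjoint after a single blow-up. By Bertini the generic fiber $F$ is smooth away from $\mathcal{X}$, and at each $p$ its tangent cone is a generic combination $a\,\mathrm{tc}_p(C_1)+b\,\mathrm{tc}_p(C_2)$ of two coprime degree-$n_p$ forms, hence has $n_p$ distinct roots---so $F$ acquires only an ordinary $n_p$-fold point at $p$. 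The geometric genus drop of an ordinary $m$-fold point being $\binom{m}{2}$, I obtain
\begin{equation*}
\chi(F) = (3d-d^2) + \sum_{p\in\mathcal{X}}(n_p^2-n_p),
\end{equation*}
which is exactly the bracketed factor multiplied by $(2-k)$.

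Next I would identify the $k$ completely reducible fibers (the blocks) as the distinguished singular fibers and compute the Euler characteristic of each. The reduced fiber over the $i$-th block is the union of the strict transforms of the lines of $\mathcal{A}_i$; line multiplicities $m(\ell)$ are invisible to the topological Euler characteristic. Each line contributes $\chi(\mathbb{P}^1)=2$, and the only self-intersections surviving on $Y$ occur at the within-block points of $\overline{\mathcal{X}}$, since the base points of $\mathcal{X}$ through which several lines of a block pass are \emph{separated} by the blow-up (distinct lines have distinct tangent directions). Counting the inclusion--exclusion correction $-(m_p-1)$ at each ordinary $m_p$-fold point $p\in\overline{\mathcal{X}}$---where, crucially, all lines through $p$ lie in a single block because $p\notin\mathcal{X}$---and summing over the blocks gives
\begin{equation*}
\sum_{i=1}^{k}\chi(F_{\mathrm{block}_i}) = 2|\mathcal{A}| - \sum_{p\in\overline{\mathcal{X}}}(m_p-1).
\end{equation*}
Substituting the two displays into the additivity formula produces the claimed expression plus a residual sum $\sum_{s\ \mathrm{other}}\bigl(\chi(F_s)-\chi(F)\bigr)$ over the remaining singular fibers.

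The inequality and its equality case then follow from the fact that for a fibration of curves every singular reduced fiber satisfies $\chi(F_s)\geq\chi(F)$, with equality precisely when $F_s$ is smooth; hence the residual sum is non-negative and vanishes exactly when the $k$ blocks are the only singular fibers of $\hat\pi$. The main obstacle I anticipate is not the bookkeeping but verifying the two \emph{exact} local contributions: that the generic member really has an ordinary (rather than a worse) $n_p$-fold point at each base point, so that the genus drop is exactly $\binom{n_p}{2}$ and a single blow-up resolves the base locus, and that no hidden correction occurs along $\overline{\mathcal{X}}$. Establishing the genericity of the tangent cone and the non-negativity $\chi(F_s)\geq\chi(F)$ (for instance via the Milnor numbers of the fiber singularities) are the load-bearing inputs.
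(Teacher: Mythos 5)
Your proposal is correct and follows essentially the argument of Falk--Yuzvinsky that the paper cites for this theorem (and summarizes in one line at the start of Section \ref{sec:4}): blow up $\mathbb{P}^2$ at the points of $\mathcal{X}$, resolve the Ceva pencil to a morphism, and compare $\chi(Y)=3+|\mathcal{X}|$ with the fiberwise count, using that the generic fiber has ordinary $n_p$-fold points and that the block fibers contribute $2|\mathcal{A}|-\sum_{p\in\overline{\mathcal{X}}}(m_p-1)$. The one point deserving extra care, which you rightly flag as load-bearing, is the equality case: $\chi(F_s)=\chi(F)$ for a non-smooth fiber can occur for multiple fibers when $\chi(F)=0$, so that degenerate possibility must be excluded (e.g.\ via the exceptional divisors being multisections of degree $n_p$) to get the stated ``if and only if.''
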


\begin{corollary}
Equality holds in (1) if and only if the restriction of $\pi$ to the complement $M=\mathbb{P}^2-\left(\cup\mathcal{A}\right)$ of $\mathcal{A}$ is a smooth bundle projection with base $B=\mathbb{P}^1-(k \textrm{ points})$ and fiber a smooth surface with some points removed. In particular, $\mathcal{A}$ is a $K(\pi,1)$-arrangement.
\end{corollary}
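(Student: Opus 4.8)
The plan is to use the preceding theorem to convert the corollary into a statement about the fibers of the pencil, and then feed that into Ehresmann's fibration theorem. By the theorem, equality in (1) holds precisely when the $k$ completely reducible fibers (the blocks) are the only singular fibers of $\pi$. It therefore suffices to show that this fiber condition is equivalent to $\pi|_M$ being a smooth locally trivial fibration over $B=\mathbb{P}^1-(k\text{ points})$, and then to deduce the $K(\pi,1)$ property from the resulting bundle structure.

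First I would resolve the indeterminacy of the rational map $\pi$. Blowing up $\mathbb{P}^2$ along the base locus $\mathcal{X}$ produces a smooth surface $Y$ with a genuine morphism $\tilde\pi:Y\to\mathbb{P}^1$ whose fibers are the strict transforms of the fibers of $\pi$. Since $\mathcal{X}\subseteq\cup\mathcal{A}$, the blowdown restricts to an isomorphism identifying $M=\mathbb{P}^2-\cup\mathcal{A}$ with $Y$ minus the total transform $D$ of $\cup\mathcal{A}$. I would record that the blocks occur as the fibers of $\tilde\pi$ over $k$ distinct parameters $t_1,\dots,t_k$, so that $B=\mathbb{P}^1-\{t_1,\dots,t_k\}$, and that over $B$ the divisor $D$ consists only of exceptional divisors: the strict transforms of the lines lie in the fibers over the $t_i$ (which are removed), and distinct fibers of the morphism $\tilde\pi$ are disjoint.

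The central step is Ehresmann's theorem together with its relative version for the complement of a divisor. Assuming equality in (1), the only singular fibers of $\tilde\pi$ lie over $t_1,\dots,t_k$, so $\tilde\pi$ is a proper submersion over $B$ and hence a smooth fiber bundle with compact fiber a smooth curve $\tilde F$. Because every fiber over $B$ is smooth, it meets each exceptional divisor in a configuration that is locally constant along $B$, so $D$ restricts over $B$ to a sub-bundle transverse to the fibers; the relative (stratified) form of Ehresmann's theorem applied to the pair $(\tilde\pi^{-1}(B),D)$ then makes $M\cong\tilde\pi^{-1}(B)-D\to B$ a locally trivial fibration whose fiber is $\tilde F$ with finitely many points deleted, i.e. a smooth surface with some points removed. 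For the converse, if $\pi|_M$ is such a bundle over exactly $B$, then no fiber over $B$ can be singular, since a singular fiber would force the topology of the nearby fibers to jump and so violate local triviality; hence the blocks are the only singular fibers and equality holds by the theorem.

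Finally, the $K(\pi,1)$ conclusion follows from asphericity of base and fiber. The base $B=\mathbb{P}^1-(k\text{ points})$ with $k\geq 3$ is homotopy equivalent to a wedge of circles, hence aspherical, and the fiber, a noncompact Riemann surface, is homotopy equivalent to a graph and so aspherical as well; the long exact homotopy sequence of the fibration then gives $\pi_n(M)=0$ for all $n\geq 2$, so $M$ is a $K(\pi,1)$. I expect the main obstacle to be the relative Ehresmann step, where one must verify carefully that excising the lines preserves local triviality. This hinges on the exceptional divisors meeting the smooth fibers over $B$ in a configuration that is constant along $B$, which is exactly the geometric content of the absence of singular fibers over $B$ supplied by the theorem.
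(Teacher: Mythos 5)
The paper offers no proof of this corollary: it is quoted from Falk and Yuzvinsky \cite{FY}, and the argument there is essentially the one you outline (resolve the pencil by blowing up $\mathcal{X}$, apply Ehresmann/Thom--Mather over the complement of the $k$ critical values, and read off asphericity from the homotopy exact sequence, the base and the punctured fiber both being aspherical). Your reconstruction is correct; the two points you assert rather than verify --- that a single blowup at each $p\in\mathcal{X}$ already resolves the indeterminacy of $\pi$, and that every fiber over $B$ meets each exceptional curve transversally in exactly $n_p$ points so that the number of deleted points is constant --- both do hold, the first because $\sum_p n_p^2=d^2$ forces the local intersection numbers at the base points to equal the products of the multiplicities, and the second because a fiber with a degenerate tangent cone at some $p\in\mathcal{X}$ would have strictly larger Euler characteristic and so would already violate the equality in (1).
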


\begin{definition}
\label{complete}
A $(k,d)$-multinet $(d\geq 2)$ or its associated Ceva pencil is called {\it complete} if the equality holds in  (\ref{E1}). When $k=3$ this condition reduces to
\begin{eqnarray}
\label{E2}
\sum_{p\in \overline{\mathcal{X}}} (m_p-1) &\geq& 2|\mathcal{A}|-|\mathcal{X}|-3(d+1)+\sum_{p\in \mathcal{X}} n_p.
\end{eqnarray}  
Thus the underlying arrangement of a complete multinet is a $K(\pi,1)$-arrangement.
\end{definition}

Falk and Yuzvinsky present several examples of complete multinets in \cite{FY}, specifically the arrangements presented in Example \ref{Zn}, Example \ref{G(n,1,3)}, and Example \ref{H}.  It follows that any arrangement which is lattice equivalent to the one of the arrangements defined by $[x^n-y^n][x^n-z^n][y^n-z^n]$, $[x^n(y^n-z^n)][y^n(x^n-z^n)][z^n(x^n-y^n)]$, or the Hesse configuration is complete. Currently, these are the only known examples of complete multinets. In fact, we show that the family of examples given by the Fermat pencil are the only complete $3$-nets. 

\begin{theorem}
\label{main}
A complete $(3,n)$-net is projectively equivalent to the arrangement with defining polynomial $[x^n-y^n][x^n-z^n][y^n-z^n]$. 
\end{theorem}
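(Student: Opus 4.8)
The plan is to apply the completeness inequality (\ref{E2}) directly to a $(3,n)$-net, extract from the equality condition a strong constraint on the number and type of multiple points lying off the base $\mathcal{X}$, and then show that only the Fermat arrangement achieves it. Since we are dealing with a net rather than a general multinet, I would first substitute the net data into (\ref{E2}): here every line has $m(\ell)=1$, so $|\mathcal{A}|=3n$; every base point has $n_p=1$, so $|\mathcal{X}|=n^2$ and $\sum_{p\in\mathcal{X}}n_p=n^2$. Plugging these in, the right-hand side of (\ref{E2}) becomes $6n - n^2 - 3(n+1) + n^2 = 3n-3$. Thus completeness for a $(3,n)$-net is equivalent to the clean numerical condition
\begin{equation}
\label{E3}
\sum_{p\in\overline{\mathcal{X}}}(m_p-1)\geq 3(n-1),
\end{equation}
where the sum runs over all intersection points of $\mathcal{A}$ that are \emph{not} base points, i.e. the points where two lines of the \emph{same} block meet (together with any accidental same-block coincidences).

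The next step is to bound $\sum_{p\in\overline{\mathcal{X}}}(m_p-1)$ from above in terms of the combinatorics of the three blocks, and to see that equality in (\ref{E3}) forces each block to be a pencil. Fix one block, say $\mathcal{A}_1$, with its $n$ lines. Any two of its lines meet in a point of $\overline{\mathcal{X}}$ (by definition of the net, same-block intersections avoid $\mathcal{X}$), so the $\binom{n}{2}$ pairwise intersections of $\mathcal{A}_1$ contribute to $\overline{\mathcal{X}}$. A standard counting identity packages these: if the lines of $\mathcal{A}_1$ pass through points of $\overline{\mathcal{X}}$ with multiplicities, then $\sum_{p\in\overline{\mathcal{X}}\cap P_{\mathcal{A}_1}}\binom{m_p}{2}=\binom{n}{2}$, and consequently $\sum_{p\in\overline{\mathcal{X}}\cap P_{\mathcal{A}_1}}(m_p-1)\leq n-1$, with equality precisely when all $n$ lines are concurrent, i.e. when $\mathcal{A}_1$ is a pencil. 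Summing this over the three blocks, and noting that a point of $\overline{\mathcal{X}}$ cannot involve lines from two different blocks (such a point would lie in $\mathcal{X}$), we obtain $\sum_{p\in\overline{\mathcal{X}}}(m_p-1)\leq 3(n-1)$. Comparing with (\ref{E3}), completeness forces equality in all three per-block bounds simultaneously, hence \emph{every block is a pencil}.

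Once we know all three blocks are pencils, the classification is immediate: a $(3,n)$-net whose every block is a pencil is, by Yuzvinsky's Proposition 3.3 of \cite{Ynet} recalled in Example \ref{Zn}, projectively equivalent to the Fermat arrangement $[x^n-y^n][x^n-z^n][y^n-z^n]$, which completes the proof. The main obstacle I anticipate is the counting step of the second paragraph: one must argue carefully that the inequality $\sum(m_p-1)\leq n-1$ for a single block is saturated \emph{only} by a pencil, and that the three blocks' contributions to $\overline{\mathcal{X}}$ are genuinely disjoint so the per-block bounds add without overlap. The disjointness follows cleanly from the net axioms (different-block intersections are exactly the base $\mathcal{X}$), and the equality analysis follows from the convexity/identity $\sum\binom{m_p}{2}=\binom{n}{2}$ with $\sum(m_p-1)$ maximized by concentrating all incidences at a single point. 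I would also verify at the outset that the trivial $K(\pi,1)$ concerns of Definition \ref{complete} do not interfere, using the standing assumption $d=n\geq 2$.
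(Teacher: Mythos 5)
Your overall skeleton matches the paper's proof (reduce via the substitution $|\mathcal{A}|=3n$, $n_p\equiv 1$ to the condition $\sum_{p\in\overline{\mathcal{X}}}(m_p-1)=3(n-1)$, show this forces every block to be a pencil, then invoke Yuzvinsky's Proposition 3.3), but the central counting step has its inequality reversed, and this is a genuine error, not a cosmetic one. You claim that the identity $\sum_{p}\binom{m_p}{2}=\binom{n}{2}$ for a single block implies $\sum_{p}(m_p-1)\leq n-1$, with $\sum(m_p-1)$ \emph{maximized} by concentrating all incidences at one point. The truth is the opposite: since $\binom{m}{2}$ grows faster than $m-1$, concentration \emph{minimizes} $\sum(m_p-1)$ subject to the fixed pair count. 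Concretely, for $n=3$ lines in general position one has three double points, so $\sum(m_p-1)=3>2=n-1$, while the pencil gives exactly $n-1=2$; so the correct statement is $\sum_{p\in\overline{\mathcal{X}}\cap P_{\mathcal{A}_i}}(m_p-1)\geq n-1$ with equality if and only if $\mathcal{A}_i$ is a pencil. (The paper gets this lower bound cleanly by fixing one line $\ell_0$ of the block: the other $n-1$ lines each meet $\ell_0$ in a point of $\overline{\mathcal{X}}$, contributing exactly $n-1$ to the sum restricted to $\ell_0$, and any same-block intersection off $\ell_0$ adds strictly more.)

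With your direction of the inequality the argument cannot close: you would be combining ``completeness gives $\geq 3(n-1)$'' with ``counting gives $\leq 3(n-1)$'' to conclude every $(3,n)$-net is forced to equality, i.e.\ every $(3,n)$-net would be complete, which is false. Relatedly, you treat completeness as the inequality (\ref{E2}) itself, but that inequality holds for \emph{every} multinet; completeness is by Definition \ref{complete} the \emph{equality} case. The repair is exactly the paper's route: the counting gives $\sum_{p\in\overline{\mathcal{X}}}(m_p-1)\geq 3(n-1)$ for any $(3,n)$-net, completeness pins this down to equality, equality forces each per-block lower bound to be saturated, and saturation happens only for pencils. Once that is fixed, your disjointness observation (points of $\overline{\mathcal{X}}$ involve lines from only one block) and the final appeal to Proposition 3.3 of \cite{Ynet} are correct and identical to the paper's.
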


\begin{proof}
By Proposition 3.3 of \cite{Ynet}, it suffices to show each block of a complete $(3,n)$-net is a pencil. Using Definition \ref{complete} and Proposition \ref{properties}, the Riemann-Hurwitz type formula (\ref{E2}) for a complete $(3,n)$-net becomes
\begin{eqnarray*}
\sum_{p\in \overline{\mathcal{X}}}(m_p-1) & = & 2|\mathcal{A}|-|\mathcal{X}|-3(n+1)+\sum_{p\in \mathcal{X}}n_p \\
& = & 2(3n) -|\mathcal{X}| - 3(n+1)+|\mathcal{X}| \\
& = & 3(n-1).
\end{eqnarray*}
Consider the block $\mathcal{A}_i$. Note $n>1$ and select a line $\ell_0\in \mathcal{A}_i$. Since $\ell\cap \ell_0 \in \overline{\mathcal{X}}\cap P_{\mathcal{A}_i}$ for each $\ell \in \mathcal{A}_i\setminus\{\ell_0\}$, we have 
\begin{eqnarray*}
\sum_{p\in \overline{\mathcal{X}}\cap\ell_0}(m_p-1) & = & \left(\sum_{p\in \overline{\mathcal{X}}\cap\ell_0}m_p\right) -|\overline{\mathcal{X}}\cap\ell_0| \\
& = & [(n-1)+|\overline{\mathcal{X}}\cap\ell_0|]-|\overline{\mathcal{X}}\cap\ell_0| \\
& = & n-1.
\end{eqnarray*}
Moreover, $\overline{\mathcal{X}}\cap \ell_0 \subseteq \overline{\mathcal{X}}\cap P_{\mathcal{A}_i}$  and $m_p\geq 2$ for each $p\in \overline{\mathcal{X}}\cap P_{\mathcal{A}_i}$, hence 
\begin{eqnarray}
\label{E3}
\sum_{p\in \overline{\mathcal{X}}\cap P_{\mathcal{A}_i}}(m_p-1) & \geq & \sum_{p\in \overline{\mathcal{X}}\cap\ell_0}(m_p-1)=n-1.
\end{eqnarray}
It follows that 
\begin{eqnarray*}
\sum_{p\in \overline{\mathcal{X}}}(m_p-1) & \geq & 3 \cdot \min_i\left(\sum_{p\in \overline{\mathcal{X}}\cap P_{\mathcal{A}_i}}(m_p-1) \right) \\
& \geq & 3 (n-1).
\end{eqnarray*}
If $\overline{\mathcal{X}}\cap \ell_0 = \overline{\mathcal{X}}\cap P_{\mathcal{A}_i}$ for each $i$ and every $\ell_0\in \mathcal{A}_i$, or equivalently $\mathcal{A}_i$ is a pencil for each $i$, then equality holds and the $3$-net is complete. Conversely, suppose the $3$-net has another intersection point $p_0\in (\overline{\mathcal{X}}\cap P_{\mathcal{A}_i})\setminus (\overline{\mathcal{X}} \cap \ell_0)$ for some $i$. Then (\ref{E3}) is a strict inequality since the lefthand sum is larger by at least one and implies that the $3$-net is not complete. The result now follows.
\end{proof}

Vall\`es shows in Theorem 2.7 of \cite{V} that the union of all the singular fibers of a pencil of degree $n$ plane curves whose base locus consisting of $n^2$ distinct points is a free divisor. Since all singular fibers of a Ceva pencil are completely reducible for a complete multinet, we obtain the following statement.
\begin{theorem}
\label{free}
Complete nets are free.
\end{theorem}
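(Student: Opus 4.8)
The plan is to leverage the cited result of Vall\`es (Theorem 2.7 of \cite{V}) together with the structural characterization of complete multinets already established in the excerpt. The statement to prove is Theorem \ref{free}: complete nets are free. First I would recall exactly what it means for $\mathcal{A}$ to be a net: by the definitions in Section \ref{sec:2}, a $(k,d)$-net has $m(\ell)=n_p=1$ for all lines $\ell$ and all base points $p$, and moreover the intersection of each two fibers is transversal, so that $|\mathcal{X}|=d^2$. Thus the base locus $\mathcal{X}$ of the associated Ceva pencil consists of exactly $d^2$ points, and I would check these are distinct — which is immediate since $n_p=1$ forces each base point to be a simple intersection point of the $d$ lines from each block passing through it, giving $d^2$ distinct transversal intersection points.

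Next I would invoke the meaning of completeness. By Definition \ref{complete} and its accompanying Corollary, a multinet is complete precisely when equality holds in (\ref{E1}), and this equality is characterized as the condition that the blocks of $\mathcal{A}$ form the \emph{only} singular fibers of the associated pencil $\pi$. In other words, for a complete net, every singular fiber of the degree-$d$ Ceva pencil is completely reducible and appears among the $k$ blocks. This is the bridge between the combinatorial completeness condition and the geometric hypothesis of Vall\`es' theorem.

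The core of the proof is then a direct application of Vall\`es' result. Theorem 2.7 of \cite{V} states that the union of all singular fibers of a pencil of degree $n$ plane curves, whose base locus consists of $n^2$ distinct points, is a free divisor. For a complete $(k,d)$-net, the base locus $\mathcal{X}$ has exactly $d^2$ distinct points by the net condition, so the hypothesis of Vall\`es' theorem is met with $n=d$. Because completeness guarantees that every singular fiber is one of the completely reducible blocks, the union of all singular fibers coincides exactly with $\cup\mathcal{A}$, the underlying line arrangement of the net. Hence $\cup\mathcal{A}$ is a free divisor, i.e. $\mathcal{A}$ is a free arrangement.

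I expect the main subtlety — rather than a deep obstacle — to lie in confirming that the hypotheses of Vall\`es' theorem are genuinely satisfied, namely that the base locus has $d^2$ \emph{distinct} points and that completeness really does force \emph{all} singular fibers (not merely the completely reducible ones) to be accounted for by the blocks. The first point is guaranteed by the transversality built into the definition of a net; the second is exactly the content of the equality case in the Corollary following (\ref{E1}), which identifies completeness with the blocks being the only singular fibers. Once these two facts are pinned down, the conclusion is immediate and the proof is short.
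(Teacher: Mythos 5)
Your proposal is correct and follows exactly the paper's argument: the net condition supplies the $d^2$ distinct base points required by Vall\`es' Theorem 2.7, completeness identifies the union of all singular fibers with the arrangement itself, and freeness follows. The paper's proof is precisely this two-step application of the cited result, so there is nothing to add.
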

Alternatively, this can be established for $3$-nets by combining Theorem \ref{main} and Theorem 6.60 from \cite{OT}. Note that the Hesse configuration is the only currently known $4$-net. It is complete (see \cite{FY}) and free (see Proposition 6.85 of \cite{OT}). 

\subsection{Completeness of Induced Multinets from $Q_n$} 
The known infinite families of complete $3$-multinets are related to induced multinets from $Q_n$. The $(3,2n)$-multinets of type $G(n,1,3)$ are induced multinets from $Q_n$ by choosing $H$ to be the plane $x_0=0$. Furthermore the $(3,n)$-nets from the Fermat pencil appear as their subarrarangements. The latter family of nets are not inducible directly from $Q_n$ with one exception, namely any $(3,2)$-net is complete and can be induced from $Q_1$. 

With induced multinets from $Q_n$ providing numerous examples of multinets, we investigate this class of multinets for completeness. Let $\mathcal{A}_p=\{\ell\in \mathcal{A}:p\in \ell\}$ denote the lines of $\mathcal{A}$ passing through the point $p$. A useful tool is the following local test  for completeness presented in \cite{FY}.

\begin{proposition}
\label{local}
Suppose $\mathcal{A}$ is a complete multinet. Then, for each $p\in \mathcal{X}$, \begin{eqnarray}
\label{E4}
2n_p-2 & = & \sum_{\ell \in \mathcal{A}_p}(m(\ell)-1). 
\end{eqnarray} 
In particular, if the multinet is light, then the multinet is complete only if it is a net. 
\end{proposition}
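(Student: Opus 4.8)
The plan is to deduce \eqref{E4} from two facts: an unconditional local inequality at each base point, and a single global equality that completeness forces on the sum of the resulting local defects; the ``in particular'' clause then follows at once. Throughout I use that for $p\in\mathcal{X}$ every block contains a line through $p$, so
\[
\sum_{\ell\in\mathcal{A}_p}m(\ell)=\sum_{i=1}^{k}\ \sum_{\ell\in\mathcal{A}_i,\ p\in\ell}m(\ell)=k\,n_p,
\]
and hence $\sum_{\ell\in\mathcal{A}_p}(m(\ell)-1)=k\,n_p-|\mathcal{A}_p|$. Thus \eqref{E4} is equivalent to the count $|\mathcal{A}_p|=(k-2)n_p+2$ for every $p\in\mathcal{X}$, and it suffices to establish this.

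The local step is to prove, for every multinet and every $p\in\mathcal{X}$, the bound $\sum_{\ell\in\mathcal{A}_p}(m(\ell)-1)\le 2n_p-2$. In local coordinates centered at $p$ the two generators of the Ceva pencil both vanish to order $n_p$, so passing to leading terms produces a pencil of tangent cones, i.e.\ a line $L$ in the projective space $\mathbb{P}^{n_p}$ of binary forms of degree $n_p$. For each block $\mathcal{A}_i$ the associated member of $L$ is the tangent cone of that block at $p$: a product of the $a_i:=|\mathcal{A}_i\cap\mathcal{A}_p|$ distinct linear forms through $p$, each raised to its multiplicity, so its repeated-root excess equals $\sum_{\ell\in\mathcal{A}_i\cap\mathcal{A}_p}(m(\ell)-1)=n_p-a_i$. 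Because the discriminant $\Delta$ of binary $n_p$-forms vanishes to order $n_p-a_i$ along the locus of forms with this root pattern, $L$ meets $\{\Delta=0\}$ at this member with multiplicity at least $n_p-a_i$. Summing over the $k$ blocks and comparing with $\deg(\Delta|_L)=2(n_p-1)$ yields $\sum_i(n_p-a_i)\le 2(n_p-1)$, which is the asserted bound since $|\mathcal{A}_p|=\sum_i a_i$.

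The global step is to show that completeness forces
\[
\sum_{p\in\mathcal{X}}\bigl[\,2n_p-2-\textstyle\sum_{\ell\in\mathcal{A}_p}(m(\ell)-1)\,\bigr]=0 .
\]
By the Corollary, completeness means $\pi$ restricts to a bundle $M\to B$, so $\chi(M)=(2-k)\,\chi(F\cap M)$, where $F$ is the smooth model of the generic fiber and $F\cap M$ is the bundle fiber. Since the generic fiber has an ordinary $n_p$-fold point at each $p$ (its tangent cone is the generic, reduced member of $L$) and is otherwise smooth, one gets $\chi(F)=3d-d^2+\sum_{p\in\mathcal{X}}(n_p^2-n_p)$, and deleting the $\sum_{p}n_p$ points of $F$ over $\mathcal{X}$ gives $\chi(F\cap M)=\chi(F)-\sum_{p}n_p$. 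On the other hand $\chi(M)=3-2|\mathcal{A}|+\sum_{p\in P_{\mathcal{A}}}(|\mathcal{A}_p|-1)$ purely topologically; using the completeness equality \eqref{E1} (which carries precisely the contributions of $|\mathcal{A}|$ and of the within-block points $\overline{\mathcal{X}}$) to eliminate those terms, comparison of the two expressions for $\chi(M)$ collapses to the displayed defect sum. Each summand is nonnegative by the local step, so every summand vanishes, giving $|\mathcal{A}_p|=(k-2)n_p+2$ and hence \eqref{E4}. Finally, if the multinet is light then $m(\ell)=1$ for all $\ell$, so \eqref{E4} reads $2n_p-2=0$; thus $n_p=1$ for every $p\in\mathcal{X}$, and a light multinet with all $n_p=1$ is a net, which is the ``in particular''.

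The principal obstacle is the local step, precisely the claim that $\Delta$ vanishes to order at least $n_p-a_i$ at the block member — equivalently that the discriminant hypersurface of binary $n_p$-forms has multiplicity $n_p-a_i$ along the stratum of forms with the prescribed root multiplicities. This requires either a classical multiplicity computation for the discriminant or a direct deformation argument, and it is here that completeness plays no role; that is exactly why the global Euler-characteristic identity is indispensable. Indeed the line $L$ could a priori be tangent to $\{\Delta=0\}$ at a block member, making the per-point inequality strict even for a complete multinet, and only the vanishing of the total defect forces transversality, hence equality, at every base point simultaneously. A secondary point needing care is verifying that the generic fiber is ordinary at each base point, so that the branch count is $n_p$ and the formula $\chi(F\cap M)=\chi(F)-\sum_p n_p$ is valid.
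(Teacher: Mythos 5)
The paper does not actually prove this proposition --- it is quoted verbatim from Falk--Yuzvinsky \cite{FY} (``the following local test for completeness presented in \cite{FY}''), so there is no in-paper proof to compare against. Judged on its own, your argument is correct and is essentially a reconstruction of the \cite{FY} proof: a pointwise inequality coming from the pencil of tangent cones at each base point, forced into equality by a global Euler-characteristic identity. I checked the ``collapse'' you assert in the global step: writing $\chi(M)=3-2|\mathcal{A}|+\sum_{p\in\overline{\mathcal{X}}}(m_p-1)+\sum_{p\in\mathcal{X}}(|\mathcal{A}_p|-1)$ and equating it with $(2-k)\bigl[3d-d^2+\sum_p(n_p^2-n_p)-\sum_p n_p\bigr]$, then substituting the equality case of \eqref{E1}, everything cancels to $\sum_{p\in\mathcal{X}}\bigl[|\mathcal{A}_p|-2-(k-2)n_p\bigr]=0$, which is exactly your total defect; combined with the local bound $|\mathcal{A}_p|\geq (k-2)n_p+2$ this gives \eqref{E4}, and the ``in particular'' clause follows as you say. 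One remark: the step you single out as the ``principal obstacle'' --- the multiplicity of the discriminant of binary $n_p$-forms along the stratum with prescribed root pattern --- is avoidable. Since the tangent cones of two distinct blocks at $p$ are coprime binary forms (distinct lines through $p$ have distinct tangent directions), the pencil of tangent cones is base-point-free and defines a degree-$n_p$ map $\mathbb{P}^1\to\mathbb{P}^1$ under which a line $\ell\in\mathcal{A}_p$ is a ramification point of index exactly $m(\ell)$; Riemann--Hurwitz then gives $\sum_{\ell\in\mathcal{A}_p}(m(\ell)-1)\leq 2n_p-2$ directly, with no discriminant computation. The remaining facts you flag (the generic fiber has an ordinary $n_p$-fold point at each $p$, so that $\chi(F\cap M)=\chi(\tilde F)-\sum_p n_p$) do hold: Bézout forces the tangent cones of two fibers to be coprime, and in characteristic zero the generic member of a base-point-free pencil of binary forms is squarefree. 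So the argument is sound, modulo your reliance on the Corollary (the bundle characterization of completeness), which the paper likewise imports from \cite{FY} without proof.
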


Note that (\ref{E4}) holds for each $p\in \mathcal{X}$ of a net. Also since the only proper $k$-multinets occur when $k=3$ (see Proposition \ref{properties}), we focus our attention on this situation. Using combinatorial properties of multinets, Proposition \ref{local} can be freshly reformulated to give a local test for completeness which is convenient to implement. 
\begin{corollary}
\label{ctest}
Suppose $\mathcal{A}$ is a complete $3$-multinet. Then, for each $p
\in \mathcal{X}$, 
\begin{eqnarray}
\label{E5}
|\mathcal{A}_p| & = & n_p+2.
\end{eqnarray}
\end{corollary}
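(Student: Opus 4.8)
The plan is to read (E5) off directly from the local completeness test (E4) of Proposition \ref{local}, after computing the total multiplicity carried by the lines through a base point of a $3$-multinet.

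First I would record that, since $k=3$ and $p\in\mathcal{X}$, each of the three blocks contains at least one line through $p$. Indeed, by condition (i) the block sum $\sum_{\ell\in\mathcal{A}_i,\,p\in\ell}m(\ell)$ equals $n_p$ for every $i$, and this common value is positive because $p$, lying in $\mathcal{X}$, is met by lines from at least two different blocks. Since the lines through $p$ are partitioned among the three blocks, summing the block sums gives
\[
\sum_{\ell\in\mathcal{A}_p}m(\ell)=\sum_{i=1}^{3}\;\sum_{\substack{\ell\in\mathcal{A}_i\\ p\in\ell}}m(\ell)=3n_p.
\]

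Next I would expand the right-hand side of (E4): splitting off the $-1$ from each summand gives
\[
\sum_{\ell\in\mathcal{A}_p}(m(\ell)-1)=\Bigl(\sum_{\ell\in\mathcal{A}_p}m(\ell)\Bigr)-|\mathcal{A}_p|=3n_p-|\mathcal{A}_p|.
\]
Because $\mathcal{A}$ is complete, Proposition \ref{local} equates this with $2n_p-2$, so $2n_p-2=3n_p-|\mathcal{A}_p|$, and solving for $|\mathcal{A}_p|$ yields $|\mathcal{A}_p|=n_p+2$, which is exactly (E5).

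I do not anticipate a genuine obstacle; the only step that warrants care is the bookkeeping that the three blockwise multiplicity sums at $p$ all equal $n_p$, so that the total incident multiplicity is exactly $3n_p$ and not some point-dependent quantity. Everything after that is immediate algebra from the local test, so the reformulation is purely combinatorial as claimed.
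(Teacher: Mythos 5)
Your proof is correct and follows the same route as the paper: both compute $\sum_{\ell\in\mathcal{A}_p}(m(\ell)-1)=3n_p-|\mathcal{A}_p|$ and substitute into (\ref{E4}). The only difference is that you spell out the justification (via condition (i) of the multinet definition) that each of the three blocks contributes exactly $n_p$ to the multiplicity sum at $p$, a step the paper leaves implicit.
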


\begin{proof}
For $3$-multinets, observe $$\sum_{\ell\in \mathcal{A}_p}(m(\ell)-1)=3n_p-|\mathcal{A}_p|.$$ Substituting into (\ref{E4}) and simplifying gives the statement. 
\end{proof}

\begin{theorem}
The only complete multinets induced from $Q_n$ are the $(3,2)$-net of Coxeter type $A_3$ and the $(3,2n)$-multinets of type $G(n,1,3)$. 
\end{theorem}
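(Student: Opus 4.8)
The plan is to combine the classification of induced multinets in Theorem \ref{classify} with the two completeness criteria recalled above, treating the ten combinatorial types one at a time. Type (1) is precisely the multinet of type $G(n,1,3)$, which is complete by the results of Falk and Yuzvinsky quoted after Example \ref{G(n,1,3)}; its $n=1$ instance is the $(3,2)$-net of Coxeter type $A_3$, which is the Fermat arrangement for $d=2$ and is likewise complete. So the real content of the statement is that every one of the remaining types fails to be complete. For the light types (6)--(10) I would invoke the light half of Proposition \ref{local}: a light multinet can be complete only if it is a net, after which Theorem \ref{main} pins down the complete nets. For the heavy types (2)--(5) I would instead use the local test, in the form of Corollary \ref{ctest} or the equivalent identity $2n_p-2=\sum_{\ell\in\mathcal{A}_p}(m(\ell)-1)$ of Proposition \ref{local}, evaluated at base points lying on a line of multiplicity $\geq 2$.

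For the light types, types (6)--(9) carry a base point of multiplicity $>1$ for all $n$ in their stated ranges apart from the small degenerate values ($n=2$ in (6),(7); $n=3$ in (8)) at which they collapse to nets; in the proper ranges they are not nets and so, by Proposition \ref{local}, not complete. The nets arising among the induced light multinets are therefore the $(3,2)$-net (type (6), $n=2$), the $(3,3)$-net with blocks in general position (type (7), $n=2$), the $(3,4)$-net realizing $\mathbb{Z}/2\mathbb{Z}\times\mathbb{Z}/2\mathbb{Z}$ (type (8), $n=3$), and the dihedral nets of type (10). By Theorem \ref{main} a complete $(3,d)$-net is projectively equivalent to the Fermat arrangement, which realizes the cyclic group $\mathbb{Z}/d\mathbb{Z}$ with every block a pencil. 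The $(3,2)$-net meets this description and is complete; the others do not, since the type (7) net realizes $\mathbb{Z}/3\mathbb{Z}$ but with its blocks in general position rather than pencils, the type (8) net realizes the non-cyclic group $\mathbb{Z}/2\mathbb{Z}\times\mathbb{Z}/2\mathbb{Z}$, and a dihedral group of order $2n$ is non-cyclic for $n\geq 2$. (Note that for these nets the local test alone is useless, as it holds automatically at every point of any net; this is exactly why Theorem \ref{main} is needed here.)

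For the heavy types I would fix a line $\ell^{\ast}$ with $m(\ell^{\ast})\geq 2$ and study a base point $p\in\mathcal{X}\cap\ell^{\ast}$, which exists because $\sum_{p\in\mathcal{X}\cap\ell^{\ast}}n_p=2n$ by Proposition \ref{properties}(4). Since $\ell^{\ast}$ already contributes $m(\ell^{\ast})$ to $n_p$ inside its own block, one has $n_p\geq m(\ell^{\ast})$, and this together with the bounded list of allowed multiplicities in Theorem \ref{classify} forces the value of $n_p$. In the unique-heavy-line types (2) and (5) every other line through $p$ has multiplicity $1$, so the excess $\sum_{\ell\in\mathcal{A}_p}(m(\ell)-1)$ equals $m(\ell^{\ast})-1$ exactly, while the local test demands $2(n_p-1)$; substituting the forced value $n_p=n$ in type (2) gives $n-1=2(n-1)$, and $n_p=2$ in type (5) gives $1=2$, both impossible for $n>1$. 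In types (3) and (4) the local test forces every base point on a multiplicity-$2$ line to carry a \emph{second} multiplicity-$2$ line, necessarily from a different block (two such lines in one block would give $n_p\geq 4$). As distinct lines meet in a single point, $\ell^{\ast}$ can then contain at most one base point in type (4) and at most two in type (3), each of multiplicity $2$; comparing with $\sum_{p\in\mathcal{X}\cap\ell^{\ast}}n_p=2n$ forces $2n\leq 2$ and $2n\leq 4$ respectively, contradicting $n\geq 3$ in type (4) and $n\geq 4$ in type (3).

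The hardest part will not be any single estimate but the bookkeeping of the degenerate small-$n$ coincidences, which I would need to record carefully: type (3) at $n=2$ coincides with type (1) (the $B_3$ multinet), so the bound $2n\leq 4$ is correctly tight there and does not exclude it, and types (6),(7),(8) collapse to the $(3,2)$-, $(3,3)$-, $(3,4)$-nets handled by the net argument rather than the ``proper'' argument. I would also need the auxiliary fact that a multiplicity-$2$ line cannot pass through a base point whose multiplicity exceeds the maximum listed in Theorem \ref{classify}, which again follows from $n_p\geq m(\ell^{\ast})$ and the bounded multiplicities there. Assembling all cases, the only complete induced multinets from $Q_n$ are those of type (1), namely the $(3,2n)$-multinets of type $G(n,1,3)$, together with their $n=1$ specialization, the $(3,2)$-net of Coxeter type $A_3$.
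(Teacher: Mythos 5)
Your proposal is correct and follows essentially the same route as the paper: run through the ten types of Theorem \ref{classify}, dispose of the light types via Proposition \ref{local} together with Theorem \ref{main}, single out type (1) (and type (3) at $n=2$) as the complete ones, and eliminate the remaining heavy types with the local completeness test at a base point on a line of multiplicity greater than~1. Your counting argument along a heavy line for types (3) and (4) is just an equivalent repackaging of the paper's choice of a double point lying on exactly one multiplicity-2 line (and in fact supplies the existence of such a point, which the paper leaves implicit), so the two proofs coincide in substance.
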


\begin{proof}
A complete description of multinets induced from $Q_n$ was given in Theorem \ref{classify} and classified into ten types. We refer to a specific type based on the numbering conventions used there.

It follows from Proposition \ref{local}, Theorem \ref{classify}, and Theorem \ref{main} that any light induced multinet from $Q_n$ is not complete with one exception. The $(3,2)$-net of Coxeter type $A_3$ can be induced from $Q_1$. It remains to investigate the completeness of the heavy induced multinets.

Induced multinets of type 1 which realize $G(n,1,3)$ are complete. Also the induced multinet of type $3$ from $Q_2$ realizes $G(2,1,3)$, hence is also complete. We use Corollary \ref{ctest} to show the remaining types are not complete by exhibiting a point $p\in \mathcal{A}$ where (\ref{E5}) does not hold. For type 2, choose $p\in \mathcal{X}$ with multiplicity $n$ and observe $|\mathcal{A}_p|=2n+1$. Next consider an induced multinet of type 3 (with $n>2$), of type 4, or of type 5. Choose $p\in \mathcal{X}$ to be a double point which lies on exactly one of the lines of multiplicity 2. Then $n_p=2$ and $|\mathcal{A}_p|=5$. This completes the proof.
\end{proof}

\section{Open Problems}
\label{sec:5}

We list several open problems. \\

{\bf Problem 1.} Are there examples of complete multinets other than the ones exhibited in Examples \ref{Zn}, \ref{G(n,1,3)}, and \ref{H}? \\

{\bf Problem 2.} How many specializations are possible for the nets realizing Latin squares of small order such as $\mathbb{Z}/4\mathbb{Z}$, $\mathbb{Z}/2\mathbb{Z}\times\mathbb{Z}/2\mathbb{Z}$, $\mathbb{Z}/5\mathbb{Z}$, and Latin square of order 5 which is not isotopic to the multiplication table of a group (see \cite{St1})? These are nets constructed by Stipins. \\

{\bf Problem 3.} Are there any general properties regarding the number of specializations of a given multinet? \\

{\bf Problem 4.} (Yuzvinsky) Is the Hesse configuration the unique $4$-net up to projective isomorphism?

\section*{Acknowledgements}
Thank you to Max Wakefield and the anonymous referee for their helpful comments for improving this paper, especially pointing out connections with the results of Vall\`es. Also, thank you to the organizers of \emph{Configuration Spaces} and \emph{Perspectives in Lie Theory}, Centro de Giorgi, Scuola Normale Superiore di Pisa, Istituto Nazionale di Alta Matematica, and many others involved for being wonderful hosts while working on this project during my visits to Italy.

\end{document}